\newtheorem{thm}{Theorem}[section]
\newtheorem{lemma}{Lemma}[section]
\newtheorem{prop}{Proposition}[section]
\newtheorem{defn}{Definition}[section]
\begin{document}
\title{Reciprocity Laws for the Higher Tame Symbol and the Witt Symbol on an Algebraic Surface}\author{Kirsty Syder}\date{}
\maketitle
\begin{abstract}
Parshin's higher Witt pairing on an arithmetic surface can be combined with the higher tame pairing to form a symbol taking values in the absolute abelian Galois group of the function field. We prove reciprocity laws for this symbol using techniques of Morrow for the Witt symbol and Romo for the higher tame symbol.
\end{abstract}

\section{Introduction}
The study of higher local fields was initiated in the 1970s by Y. Ihara, with further work done by A. N. Parshin in the positive characteristic case and K. Kato in the general case. We recall the inductive definition: an $n$-dimensional local field $K$ is a complete discrete valuation field such that its residue field is an $(n-1)$-dimensional local field, where a one-dimensional local field is a local field and a zero-dimensional local field is a finite field. As in the one-dimensional case, all $n$-dimensional local fields can be classified as a finite extension of one of the following types of field:
\begin{enumerate}
\item{$\mathbb{F}_{q}((t_{1}))\dots((t_{n}))$, where $q$ is a prime power;}
\item{$K((t_{1}))\dots((t_{n-1}))$, where $K$ is a finite extension of $\mathbb{Q}_{p}$;}
\item{$K\{\{t_{1}\}\}\dots\{\{t_{i}\}\}((t_{i+1}))\dots((t_{n-1}))$, a mixed characteristic local field, where $K$ is a finite extension of $\mathbb{Q}_{p}$.}\end{enumerate}

See \cite{IHLF}, section one for full definitions and proofs. This paper will study two-dimensional fields of positive characteristic, i.e. fields isomorphic to $\mathbb{F}_{q}((t_{1}))((t_{2}))$ for some prime power $q$.\\
The method of using the Witt pairing to define a reciprocity map for wildly ramified extensions of fields of positive characteristic was first developed by Kawada and Satake in their paper \cite{KS}. They developed the class field theory for local fields and function fields of positive characteristic, and Parshin's method discussed below is a higher-dimensional generalisation of Kawada and Satake's method.\\
In his papers \cite{P4} and \cite{P5}, Parshin developed a reciprocity map for higher local fields by gluing together three separate maps for unramified, tamely ramified and wildly ramified extensions. See \cite{IHLF} section seven for a review of this theory, and also \cite{F7}. The map for tamely ramified extensions came from the higher tame symbol, which is a higher dimensional generalisation of the tame symbol, 
\[\{f,g\} = (-1)^{v(f)v(g)}\frac{f^{v(g)}}{g^{v(f)}}\]
for $f$, $g$ elements of a local field with valuation $v$. This symbol has been studied extensively, and the reciprocity laws described below proved using several different methods.\\
The map for wildly ramified extensions is the Artin-Schreier-Witt pairing, see section four for a full definition.\\
 We prove reciprocity laws for this symbol relying mainly on the recent work of Morrow in \cite{Mor3} and the work of Kawada and Satake in \cite{KS}. Morrow proves reciprocity laws for an arithmetic surface, i.e. the characteristic zero case as compared to our positive characteristic case, and so the difficulty is mostly in dealing with the mixed characteristic fields which occur in the case of an arithmetic surface. \\
\bigskip

We will now describe the global situation. As in the classical one-dimensional case we study a higher global field using its localisations. Let $X$ be an algebraic surface over a finite field, that is a connected smooth projective surface over $\mathbb{F}_{q}$ for some prime power $q$. Let $F$ be the function field of $X$. For a point $x \in X$ and a curve $y \subset X$, we can associate certain fields. To a pair $(x,y)$ where $x \in y \subset X$, we have a product of two-dimensional local fields $F_{x,y}$ - see the next section for more details. \\
We also have the semi-global fields $F_{y} = k(y)((t_{y}))$ where $k(y)$ is the function field of the curve $y$ and $t_{y}$ is a local parameter for $y$ in $F_{x,y}$, and $F_{x}$, the smallest ring containing the completed local ring $\hat{\mathcal{O}}_{X,x}$ and $F$. The class field theory of these fields as well as that of $F$ is of interest - see e.g. the works of Kato and Saito, \cite{KSCFT1}, \cite{KSCFT2} and \cite{KSCFT3}.\\
\bigskip

The structure of this paper is as follows. The first section will discuss the global situation and fields related to algebraic surfaces in more detail, then define the Milnor $K$-groups and topological $K$-groups of a field. The reciprocity map acts from these groups to the absolute abelian Galois group, so these are of key importance. The next two sections define the higher tame symbol and the Witt symbol and discuss their basic properties. Section five then recalls the relation of the symbols to Galois theory for a higher local field, and shows how this can be used to get a map to the absolute abelian Galois group of a semi-global field. The final two sections use this to glue together the symbols in this Galois group, and prove reciprocity laws around a point and along a curve on the algebraic surface $X$. The proofs of the reciprocity laws for the Witt symbol are new, and use a method similar to Morrow's work on reciprocity laws for differentials, adapted for our different situation which also uses methods of Witt, Kawada and Satake. The gluing together of the symbols in the global situation is also new work, we use Galois theory and appeal to the one-dimensional case.\\
When gluing the two symbols together to prove the reciprocity laws, we first prove separate reciprocity laws for each symbol, then use the homomorphism to the absolute abelian Galois group to get a reciprocity law for the glued symbols. This method is used because the higher tame symbol takes values in the multiplicative group $\mathbb{F}_{q}^{\times}$, so the reciprocity law is in the form of a product, and the Witt symbol takes values in the additive group $\mathbb{F}_{q}$ with reciprocity law in the form of a sum. Hence it is more clear to keep the two symbols separate outside of the Galois group and combine the reciprocity laws after applying the reciprocity map.\\
Finally, note the relation to other versions of class field theory for schemes. The methods used in this paper are based on the work of Kawada and Satake and then Parshin's generalisation of their local methods to the case of higher local fields. We develop this theory further by proving reciprocity laws in semi-global cases - i.e. along curves and around points - as a first step to using this method to prove duality theorems and get a full version of class field theory as in \cite{KS}.
We provide a first proof of such reciprocity laws for the Witt symbol, and an explicit proof for the higher tame symbol - both take into account singular curves on the algebraic surface, a case which has not always been looked at in past study of the higher tame symbol.
 This will be a simpler, more explicit class field theory than that developed by Kato and Saito in \cite{KSCFT1}. Another more explicit version is that developed first by Wiesend and completed by Kerz and Schmidt - see \cite{KS3} and \cite{MK}. Similarly to Parshin's approach, these papers get the unramified part of the reciprocity map from the Frobenius element at each closed point, but then  the ramified part comes from the theory of curves contained in the scheme (i.e. one-dimensional subschemes). This approach works for any dimension of scheme, so is very explicit in defining a simple class group and reciprocity map relying only on closed points and the one-dimensional theory associated to curves on the surface. Another version of class field theory for two-dimensional schemes is mentioned by Fesenko in section 34 of \cite{F3}, where he applies his generalisation from \cite{F7} and \cite{F8} of Neukirch's explicit class field theory - \cite{N} - to higher local fields.\\
\\
\textbf{Acknowledgements}\\
\\
I am grateful to Alberto C\'amara, Matthew Morrow and Thomas Oliver for many useful conversations and guidance during my research. I am also very grateful to my supervisor Ivan Fesenko for suggesting the area of work and providing many comments and improvements. I am supported by an EPSRC grant at the University of Nottingham.

\section{Algebraic Surfaces and Milnor $K$-groups}

Let $X$ be an algebraic surface over a finite field $k = \mathbb{F}_{q}$. If $x \in X$ is a smooth closed point on a reduced irreducible curve $y \subset X$, we can associate a two-dimensional local field to the pair $(x,y)$. Let $t_{y} \in \mathcal{O}_{X,x}$ be a local parameter for $y$ and $u_{x,y} \in \mathcal{O}_{y,x}$ a local parameter for $x$, where $\mathcal{O}_{y}$ is the local ring of $X$ at $y$ and $\mathcal{O}_{y,x}$ its completion at the point $x \in y$. Then the field
\[F_{x,y} = k(x)((u_{x,y}))((t_{y}))\]
is a two-dimensional local field over $k(x)$. $F_{x,y}$ can be constructed through a series of completions and localisations independent of the chosen local parameters, see \cite{CHDLF} section 3 for full details of this process.\\
If $x \in y$ is not a smooth point, let $z$ be a branch of $y$ at $x$ and define $F_{x,z}$ as above. Then we let $y(x)$ be the set of branches of $y$ at $x$ and define \[F_{x,y} = \prod_{z \in y(x)}F_{x,z}.\]
The Milnor $K$-groups of a higher local field play an important role in class field theory, taking the place of the multiplicative group in the one dimensional case to obtain the reciprocity map. As will be clear from the definition, any symbol on a field satisfying the Steinberg property will reduce to a symbol on the Milnor $K$-group of the field.
\begin{defn}
For a field $F$, let
\[I_{n} := \{\alpha_{1} \otimes \dots \otimes \alpha_{n} \in (F^{\times})^{\otimes n} : \alpha_{i} + \alpha_{j} = 1, \text{ for some } 1 \leq i,j \leq n \}.\]
Define the $n^{th}$ Milnor $K$-group of $F$ as
\[K_{n}(F) = (F^{\times})^{\otimes n}/I_{n}.\]\end{defn}
We denote elements of $K_{n}(F)$ by $\{\alpha_{1}, \dots, \alpha_{n}\}$ and write the group law additively. For a product of fields $F_{x,y}$ at a singular point $x$, the group $K_{n}(F_{x,y})$ will be a product of the $K$-groups $K_{n}(F_{x,z})$ at the branches $z \in y(x)$.\\
We also define a relative $K$-group for a complete discrete valuation field $F$, $K_{n}(\mathcal{O}_{F}, \mathfrak{p}_{F}) : = $ ker$(K_{n}(\mathcal{O}_{F}) \to K_{n}(\mathcal{O}_{F}/\mathfrak{p}_{F}))$, the kernel of the reduction map.\\
\\
For full details on the following definition, see \cite{F6}. Endow $K_{n}(F_{x,z})$ with the strongest topology such that negation and the symbol map $(F_{x,z}^{\times})^{n} \to K_{n}(F_{x,z})$ are sequentially continuous. The topology on the multiplicative group of a two dimensional local field of positive characteristic is defined as the product of the discrete topologies on the groups generated by the local parameters $u_{x,z}$, $t_{z}$, the discrete topology on $k_{z}(x)^{\times} = (\mathcal{O}_{x,z}/\mathfrak{p}_{x,z})^{\times}$ and the topology on the principal unit group induced from the topology on $F_{x,z}$. The topology on the additive group $F_{x,z}$ is defined by a system of open neighbourhoods of zero lifted from such a system in the residue field.
\begin{defn}
Define the $n^{th}$ topological Milnor K-group, $K_{n}^{top}(F_{x,z})$, as the quotient of $K_{n}(F_{x,z})$ by the intersection of all its neighbourhoods of zero. \end{defn}
Fesenko proves that $K_{n}^{top}(F_{x,z}) = K_{n}(F_{x,z})/\cap_{l\geq 1}lK_{n}(F_{x,z})$ in \cite{F6}. See \cite[6]{IHLF} for details of how symbol maps are used to study the structure of topological Milnor $K$-groups.

\section{The Higher Tame Symbol on an Algebraic Surface}
As before, let $X$ be an algebraic surface over $k$ and $x \in y \subset X$ a point on a curve contained in $X$. The higher tame symbol takes values in $k_{z}(x)$. First let $x$ be a smooth point of $y$. If $f$, $g$ and $h$ are elements of $F_{x,y}$, then the higher tame symbol is expressed as
\[ (f,g,h)_{x,y} = (-1)^{\alpha_{x,y}}\left( \frac{f^{v_{y}(g)\bar{v}_{x}(h) - v_{y}(h)\bar{v}_{x}(g)}}{g^{v_{y}(f)\bar{v}_{x}(h) - v_{y}(h)\bar{v}_{x}(f)}}h^{v_{y}(f)\bar{v}_{x}(g) - v_{y}(g)\bar{v}_{x}(f)} \right) \text{ mod } \mathfrak{p}_{x,y}\]
where:\\
\[\alpha_{x,y} = v_{y}(f)v_{y}(g)\bar{v}_{x}(h)+v_{y}(f)v_{y}(h)\bar{v}_{x}(g) + v_{y}(g)v_{y}(h)\bar{v}_{x}(f) +\]\
 \[v_{y}(f)\bar{v}_{x}(g)\bar{v}_{x}(h) + v_{y}(g)\bar{v}_{x}(f)\bar{v}_{x}(h) + v_{y}(h)\bar{v}_{x}(f)\bar{v}_{x}(g);\]
$v_{y}$ is the surjective discrete valuation induced by $y$ and $\bar{v}_{x}$ is the function
\[\bar{v}_{x}: F_{x,y} \to \mathbb{Z}\]
defined by $\bar{v}_{x}(\beta) = v_{x,y}(p(t_{y}^{-v_{y}(\beta)}\beta))$, where $p$ is the projection map from $\mathcal{O}_{x,y}$ to $\bar{F}_{x,y}$ and $v_{x,y}$ is the discrete valuation on the local field $\bar{F}_{x,y}$. Finally, $\mathfrak{p}_{x,y}$ is the maximal ideal of $\mathcal{O}_{x,y}$.\\

Parshin introduced this symbol without the sign $(-1)^{\alpha_{x,y}}$ - this was first defined by Fesenko and Vostokov in their paper \cite{FV2}. They gave a simpler definition of the symbol using a two-dimensional discrete valuation. Let $\textbf{v}:= (\bar{v}_{x}, v_{y})= (v_{1}, v_{2})$. Then the symbol $(f_{1},f_{2},f_{3})_{x,y}$ is equal to the $(q-1)^{th}$ root of unity in $\mathbb{F}_{q}^{\times}$ which is equal to the residue of 
\[f_{1}^{b_{1}}f_{2}^{b_{2}}f_{3}^{b_{3}}(-1)^{b}\]
in $\mathbb{F}_{q}$, where \[b= \sum_{s,i<j}v_{s}(b_{i})v_{s}(b_{j})b_{i,j}^{s},\] 
$b_{j}$ is $(-1)^{j-1}$ multiplied by the determinant of the matrix $(v_{i}(f_{j}))$ with the $j^{th}$ column removed and $b_{i,j}^{s}$ is the determinant of the matrix with the $i^{th}$ and $j^{th}$ columns and $s^{th}$ row removed.\\

Notice the relation to the boundary homomorphism of $K$-theory - for $F$ an $n$-dimensional local field with first residue field $\bar {F}$, there is a map
\[ \delta : K_{i}(F) \to K_{i-1}(\bar{F}).\]
See \cite{FV}, chapter seven for details of this homomorphism. For $i = 3$, the boundary map coincides with the higher tame symbol up to a sign.\\

If $x$ is not a smooth point of the curve $y$, we can define the higher tame symbol for each local branch $z \in y(x)$ and then let $( \ ,\ ,\ )_{x,y} = \prod_{z \in y(x)} N_{k_{z}(x)/\mathbb{F}_{q}}( \ ,\ , \ )_{x,z}$.
\section{The Witt Symbol on an Algebraic Surface}
It can be seen that if we view the higher tame symbol as a pairing on $K_{2}(F)$ and $F^{\times}$, then $K_{2}(\mathcal{O}_{F}, \mathfrak{p}_{F})$ is the kernel of the left hand side: see \cite[Ch. 9, 2.2]{FV}, which proves that for one application of the boundary homomorphism the kernel is given by
\[K_{n}(\mathcal{O}_{F}, \mathfrak{p}_{F}) + U_{n}(F)\]
where $U_{n}(F)$ is the subgroup of $K_{n}(F)$ generated by symbols with all values in the unit group. We induct on this proposition: one application of the boundary symbol gives us the kernel $K_{3}(\mathcal{O}_{F},\mathfrak{p}_{F})+U_{3}(F)$, and similarly for the second application.
Thus the degeneracy on the left hand side comes from elements including a principal unit of $\mathcal{O}_{F}$, i.e. the group $K_{2}(\mathcal{O}_{F},\mathfrak{p}_{F})$ as required. 
Notice that when we write $K_{2}(\mathcal{O}_{F})$, the symbols will only have entries in the \emph{multiplicative} group in $\mathcal{O}_{F}$, a point which becomes more important when not working with fields. \\
As suggested by the name, the reciprocity map of higher class field theory associated to the tame symbol does not contain any information about wildly ramified extensions. For a field of positive characteristic, 
it is natural to use Witt vectors to study abelian $p$-extensions in view of Witt duality. Kawada, Satake and Parshin used this method for local and higher local fields of characteristic $p$, where the method gives much explicit information about the relation between the field and the extensions. This results in a theory far simpler than the earlier theory of Kato, see \cite{KK1}, \cite{KK2} and \cite{KK3}. We will use the method of Kawada, Satake and Parshin to define a wildly ramified reciprocity map to accompany the above higher tame symbol, then prove reciprocity laws for the symbols.\\

We first recall the definition of the continuous differential forms. For a pair $x \in y$, let $\mathfrak{m}_{x,y}$ be the maximal ideal of $\mathcal{O}_{x,y}$, generated by $t_{y}$ and $u_{x,y}$. Let $\phi: \hat{\mathcal{O}}_{X,x} \to \bar{F}_{x,y}$ be the quotient map for the ideal $t_{y}\hat{\mathcal{O}}_{X,x}$. Define the subgroups $P_{i}$ and $T_{j}$ in $\omega_{F_{x,y}/\mathbb{F}_{q}}$ to be generated by elements $\phi^{-1}(\mathfrak{m}_{x,y})^{i}d\hat{\mathcal{O}}_{X,x}$ and $\mathfrak{m}_{x,y}^{j}d\mathcal{O}_{x,y}$ respectively. Then define

\[\Omega^{1,cts}_{F_{x,y}/\mathbb{F}_{q}} : = \Omega^{1}_{F_{x,y}/\mathbb{F}_{q}}/F_{x,y} . \cap_{i,j \geq 0} (P_{i}+T_{j}),\]
and \[\Omega^{2, cts}_{F_{x,y}/\mathbb{F}_{q}} : = \Omega^{1, cts}_{F_{x,y}/\mathbb{F}_{q}} \wedge \Omega^{1, cts}_{F_{x,y}/\mathbb{F}_{q}}.\]
\\

Now we recall the definition of the residue homomorphism. 
\begin{defn}
Let $F_{x,z}$ a two-dimensional local field of positive characteristic, and fix an isomorphism $F_{x,z} \cong k_{z}(x)((t_{1}))((t_{2}))$. Define the residue homomorphism
\[res_{F_{x,z}}: \Omega^{2, cts}_{F_{x,z}/k_{z}(x)} \to \mathbb{F}_{q}\]
by $res_{F_{x,z}}(\omega) = \text{Tr}_{k_{z}(x)/\mathbb{F}_{q}}a_{-1,-1}$ where
\[\omega = \sum a_{a_{1},a_{2}}t_{1}^{a_{1}}t_{2}^{a_{2}}dt_{1}\wedge dt_{2}.\]
\end{defn}
The residue map is independent of the choice of local parameters $t_{1}$ and $t_{2}$, see \cite{P1} section one.\\
Now let $A$ be the fraction field of the ring of Witt vectors of $\mathbb{F}_{q}$ and $L = A((t_{1}))((t_{2}))$. This lift to characteristic zero is necessary to define the following auxiliary co-ordinates and polynomials, but notice that in the end the formulae will be 'denominator free', so the reduction back down to positive characteristic is well-defined.\\
Let $x=(x_{0}, x_{1}, \dots ) \in L$, and for each $m \in \mathbb{Z}$ introduce the auxiliary co-ordinates
\[x(m) = x_{0}^{p^{m}} + px_{1}^{p^{m-1}} + \dots + p^{m}x_{m}\]
and the polynomials $P_{m}(X_{0}, X_{1}, \dots , X_{m}) \in \mathbb{Z}[p^{-1}][X_{0}][X_{1}]\dots [X_{m}]$ such that $P_{m}(x(0), x(1), \dots, x(m)) = x_{m}$.
\begin{defn}
Let $f_{1}$, $f_{2} \in F_{x,z}^{\times}$, $g \in W(F_{x,z})$ and $\bar{g} \in W(L)$ an element such that $\bar{g}$ mod $p$ $= g$. Define the Witt pairing by
\[ (f_{1}, f_{2}|g]_{x,z} = (\text{Tr}_{\mathbb{F}_{q}/\mathbb{F}_{p}}w_{i})_{i \geq 0} \in W(\mathbb{F}_{p})\]
where for each $i \in \mathbb{Z}$, 
\[w_{i} = P_{i}\left(res_{L}\left(\bar{g}(0)\frac{df_{1}}{f_{1}}\wedge \frac{df_{2}}{f_{2}}\right), \dots, res_{L}\left(\bar{g}(i)\frac{df_{1}}{f_{1}}\wedge \frac{df_{2}}{f_{2}}\right)\right) \text{ mod } p \] where the $\bar{g}(j)$ are the auxiliary polynomials for the Witt vector $\bar{g}$. Then as before for a curve $y$ with branches $z$, define
\[( \ , \ | \ ]_{x,y} = \sum_{z \in y(x)}( \ , \ | \ ]_{x,z}.\]
\end{defn}

\begin{prop}\label{wittproperties}
The Witt pairing satisfies the following properties:
\begin{enumerate}
\item{$(f_{1}.f_{1}', f_{2}|g]_{x,y} = (f_{1},f_{2}|g]_{x,y} + (f_{1}',f_{2}|g]_{x,y}$ and $(f_{1},f_{2}.f_{2}'|g]_{x,y} = (f_{1},f_{2}|g]_{x,y} + (f_{1},f_{2}'|g]_{x,y}$;}
\item{$(f_{1},f_{2}|g+h]_{x,y} = (f_{1},f_{2}|g]_{x,y} + (f_{1},f_{2}|h]_{x,y}$;}
\item{$(f_{1}, 1-f_{1}|g]_{x,y} = 0$;}
\item{$(f_{1},f_{2}|g]_{x,y} = (w_{0}, w_{1}, \dots) \implies (f_{1},f_{2}|g^{p}]_{x,y} = (w_{0}^{p}, w_{1}^{p}, \dots);$}
\item{$(f_{1},f_{2}|g]_{x,y}$ is continuous in each argument;}
\item{$(f_{1},f_{2}| g_{0},  \dots, g_{m-1}]_{x,y} = (w_{0}, \dots, w_{m-1}) \implies (f_{1}, f_{2}|g_{0}, \dots , g_{m-2}]_{x,y} = (w_{0}, \dots, w_{m-2});$}
\item{$(f_{1}, f_{2}| 0, g_{1}, \dots , g_{m-1}]_{x,y} = (0, (f_{1}, f_{2}| g_{1}, \dots, g_{m-1}]_{x,y})$.}
\end{enumerate}\end{prop}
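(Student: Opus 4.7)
The overall strategy is to reduce each of the seven statements to a corresponding fact about the characteristic-zero residue pairing $(f_{1}, f_{2}, \bar{g}) \mapsto \mathrm{res}_{L}(\bar{g}\,\tfrac{df_{1}}{f_{1}}\wedge\tfrac{df_{2}}{f_{2}})$, and then transport the result through the polynomials $P_{i}$ via the standard fact that the ghost map identifies Witt-vector addition and Verschiebung/Frobenius with the corresponding componentwise operations on ghost coordinates. A preliminary step, which I will cite from Parshin's construction rather than redo, is that each $w_{i}$ is independent of the chosen lifts $\bar{g}\in W(L)$ and of implicit lifts of $f_{1},f_{2}$ to $L^{\times}$, so that all the $P_{i}$-expressions are denominator-free modulo $p$.

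Properties (1) and (2) will be handled together. Bilinearity in $f_{1}$ and $f_{2}$ follows from $d\log(f_{1}f_{1}') = d\log f_{1} + d\log f_{1}'$ and additivity of $\mathrm{res}_{L}$, so each argument $\mathrm{res}_{L}(\bar{g}(i)\,d\log f_{1}\wedge d\log f_{2})$ is additive; the outer $P_{i}$ then repackages this as Witt-vector addition, since by definition of the $P_{i}$, componentwise addition of ghost coordinates corresponds to Witt-vector addition mod $p$. For (2), take a lift of $g+h$ whose ghost components are $\bar{g}(i)+\bar{h}(i)$ and apply the same principle in the third variable. Property (3) is the cleanest: choose a lift $\bar{f}_{1}$ of $f_{1}$ and use $1-\bar{f}_{1}$ as the lift of $1-f_{1}$; then
\[
d\log\bar{f}_{1}\wedge d\log(1-\bar{f}_{1}) \;=\; \tfrac{-1}{\bar{f}_{1}(1-\bar{f}_{1})}\,d\bar{f}_{1}\wedge d\bar{f}_{1} \;=\; 0
\]
in $\Omega^{2}_{L}$, so every residue vanishes and hence $w_{i}=0$ for all $i$.

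Properties (4), (6), and (7) are Witt-vector structural statements. For (4), a lift of $g^{p}=(g_{0}^{p},g_{1}^{p},\dots)$ has ghost components that are the Frobenius twist of those of $\bar{g}$; one checks via the defining identity of the $P_{i}$ that this shift on the inputs precisely raises each $w_{i}$ to the $p$-th power mod $p$. Property (6) is immediate from inspection: $w_{i}$ depends only on $\bar{g}(0),\dots,\bar{g}(i)$, which depend only on $g_{0},\dots,g_{i}$, so truncation passes through. Property (7) follows because Verschiebung $V(g_{1},\dots,g_{m-1})=(0,g_{1},\dots,g_{m-1})$ has ghost components $0,\,p\bar{h}(0),\,p\bar{h}(1),\dots$, and the identity $P_{i}(0,p x_{0},p x_{1},\dots)$ applied to ghost coordinates of $\bar{h}$ produces exactly the Verschiebung of the output Witt vector, using that $p$ and $P_{i-1}$ interact through $V$ at the Witt-vector level. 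Finally, (5) reduces to continuity of $d\log$ and of $\mathrm{res}_{L}$ on $\Omega^{2,cts}$ with respect to the topology described in Section~2, together with the fact that each $\bar{g}(i)$ is a polynomial in $g_{0},\dots,g_{i}$, so the product topology on $W(F_{x,z})$ induces the required continuity componentwise.

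The main obstacle is not in any single property but in the bookkeeping at the junction between characteristic zero and characteristic $p$: one must confirm that Parshin's independence-of-lift assertion is strong enough to let us, in (3), pick lifts satisfying $\overline{1-f_{1}}=1-\bar{f}_{1}$, and, in (4) and (7), pick lifts of $g^{p}$ and of $V(h)$ whose ghost components take the clean forms used above. With this flexibility in place each property becomes a short computation; without it, one has to carry around universal correction terms through $P_{i}$, which is where the real combinatorial difficulty of Witt-vector arithmetic would otherwise appear.
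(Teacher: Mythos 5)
Your proposal proves the seven properties for a \emph{single} two-dimensional local field $F_{x,z}$, working directly from the definition via ghost components of Witt vectors and the residue pairing on $L$. That is a reasonable (and essentially correct) argument, but it is not what the paper's proof establishes, and it does not cover the full statement. Recall that when $x$ is a singular point of $y$, the symbol $(\ ,\ |\ ]_{x,y}$ is defined as the \emph{sum} $\sum_{z \in y(x)}(\ ,\ |\ ]_{x,z}$ over the branches $z$ of $y$ at $x$. The paper explicitly cites Parshin \cite{P4}, 3.3.6 for the single-field case (the content you reprove from scratch) and then devotes its entire proof to showing that each property survives passage to this sum. Your proposal contains no mention of branches, the product decomposition $F_{x,y} = \prod_{z}F_{x,z}$, or any summation over $z$.

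The gap matters because several of the properties do not pass through a Witt-vector sum trivially. Properties (1), (2), (3), and (5) are additive statements and do extend immediately. But properties (4), (6), and (7) are structural assertions about the components of the output Witt vector, and Witt-vector addition is not componentwise: the $n$-th coordinate of a sum is a polynomial in the $0$-th through $n$-th coordinates of the summands. For (4) one must argue that the sum of Frobenius twists is the Frobenius twist of the sum (which works because Frobenius is a ring endomorphism of $W(\mathbb{F}_p)$, equivalently because raising to the $p$-th power commutes with the addition polynomials over $\mathbb{F}_q$); for (7) one must use that Verschiebung is additive so that a sum of vectors each with vanishing $0$-th coordinate again has vanishing $0$-th coordinate; and (6) requires the observation that the addition polynomial for the $i$-th coordinate only involves lower-index coordinates of the summands. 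These are precisely the points the paper's proof spells out, and they are absent from your proposal.

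In short: you have supplied a self-contained proof of the ingredient the paper takes as a citation, but omitted the part of the statement the paper actually proves. To complete the argument you would need to append the branch-summation step, handling (4), (6), (7) with the Witt-addition observations above. As a secondary remark, your single-field arguments for (4) and (7) are sound in spirit but compressed; in particular the sign issue that ghost components of $V(\bar h)$ are $0, p\bar h(0), p\bar h(1),\dots$ (note the factor of $p$) deserves an explicit line showing that $P_{i}(0, pr_0, \dots, pr_{i-1}) = P_{i-1}(r_0, \dots, r_{i-1})$, since otherwise a reader may worry that the residues are not being divided through correctly before reduction mod $p$.
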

\begin{proof} In \cite{P4}, 3.3.6, Parshin proves this for a single higher local field. We will prove it here for the case where $x$ is a singular point of $y$ and we must sum the pairings over each branch of $y$ at $x$. \\
Property 3 follows straight away, and properties 1 and 2 follow from the fact that trace distributes over addition.\\
Property 4 is true as
\[(f_{1},f_{2}|g^{p}]_{x,y} = \sum_{z \in y(x)} (f_{1},f_{2}|g^{p}]_{x,z} = \sum_{z \in y(x)} (w_{0,x,z}^{p},w_{1,x,z}^{p}, \dots) \]\[= \left( \sum_{z \in y(x)} w_{0,x,z}^{p}, \dots\right) = \left( \left(\sum_{z \in y(x)}w_{0,x,z}\right)^{p}, \dots \right) = (w_{0}^{p}, w_{1}^{p}, \dots)\]
where equality holds as the sum of Witt vectors is given by polynomials in their coeffiicients, and when taking powers of $p$ we just raise each coefficient to the power $p$.\\
Property 5 follows from the continuity of trace and addition. 7 is true because when summing Witt vectors, the $n^{th}$ term depends linearly only on the $0^{th}, \dots (n-1)^{th}$ terms of the vectors being summed: so if the $0^{th}$ term is $0$ for all $z \in y(x)$ then it will be in the sum also.\\
Finally, property 6 follows straight from \cite{P4}, and the fact that Witt vector summation depends only on lower terms as mentioned above.

\end{proof}

Properties one and three show that the Witt symbol is a symbol on $K_{2}(F_{x,y})$ in the first two arguments.

\section{Relation to Higher Class Field Theory}
Both the higher tame symbol and the Witt symbol are related to higher local class field theory. The Witt symbol was first used by Kawada and Satake to get a reciprocity map in the wildly ramified case for local and global fields of positive characteristic in \cite{KS}. In \cite{P4}, Parshin used both symbols to define the ramified part of the reciprocity map for higher local fields, via some well-known dualities. We will outline these dualities below, but for full details see \cite{P4}. The higher tame symbol and the Witt symbol are both sequentially continuous, so can be seen as symbols on $K_{2}^{top}(F_{x,y})$.\\
Firstly, the higher tame symbol is related to the Kummer extensions $\mathbb{F}_{q}((t_{1}^{1/l}))((t_{2}^{1/l}))/F_{x,y}$ where $l$ divides $q-1$. The higher tame symbol is a non-degenerate pairing on
\[K_{2}^{top}(F_{x,y})/(q-1)K_{2}^{top}(F_{x,y}) \times F_{x,y}^{\times}/(F_{x,y}^{\times})^{q-1} \to \mathbb{F}_{q}^{\times}\]
inducing an isomorphism
\[K_{2}^{top}(F_{x,y})/(q-1)K_{2}^{top}(F_{x,y}) \cong G_{1}\]
following from Kummer duality, where $G_{1}$ is the Galois group of the extension $\mathbb{F}_{q}((t_{1}^{1/(q-1)}))((t_{2}^{1/(q-1)}))/F_{x,y}$.\\
Similarly, the Witt symbol is related via Witt duality to the Galois group of all $p$-extensions of $F_{x,y}$. The method of using such a pairing to exploit this duality was first applied to one-dimensional local and global fields by Kawada and Satake in their paper \cite{KS}. Taking the Witt symbol as a pairing on the topological $K_{2}$-group of $F_{x,y}$ and the Witt vectors of $F_{x,y}$, we get a pairing
\[K_{2}^{top}(F_{x,y})/K_{2}^{top}(F_{x,y})_{tors} \times W(F_{x,y})/(\text{Frob}-1)W(F_{x,y}) \to \mathbb{Z}_{p}\]
where $\mathbb{Z}_{p} = W(\mathbb{F}_{p})$. The pairing induces an isomorphism
\[K_{2}^{top}(F_{x,y})/K_{2}^{top}(F_{x,y})_{tors} \cong \text{Hom}(W(F_{x,y})/(\text{Frob}-1)W(F_{x,y}), \mathbb{Z}_{p}) \cong G^{ab,p}\]
where $G^{ab,p}$ is the Galois group of the maximal abelian $p$-extension of $F_{x,y}$ and the second isomorphism follows from Witt duality.\\
This is the setting in which we will prove reciprocity laws for the symbols - viewing them both as taking values in the Galois group allows us to prove reciprocity laws which work for both symbols at the same time, in a way we could not if just viewing them as described above, as the higher tame symbol takes values in the \emph{multiplicative} group $\mathbb{F}_{q}^{\times}$ and the Witt symbol takes values in the \emph{additive} group $W(\mathbb{F}_{p})$.\\
We will now examine how to view these pairings when summing about a point or along a curve.\\
 First, fix a closed point $x \in X$. Let $F_{x}$ be the ring generated by $\hat{\mathcal{O}}_{X,x}$ and $F$. Let $L/F_{x}$ be a finite \'{e}tale extension with Galois group $G$ (see \cite{Mi} or \cite{GT}), $\mathcal{O}_{L}$ the integral closure of $\hat{\mathcal{O}}_{X,x}$ in $L$ and $\mathfrak{p}_{L}$ its maximal ideal. As mentioned at the start of section two, every height one prime ideal $\mathfrak{q} \subset \mathfrak{p}_{L}$ determines a two-dimensional local field $L_{\mathfrak{p}_{L}, \mathfrak{q}}$. Spec$(\mathcal{O}_{L})$ is a normal two-dimensional scheme over the residue field $l$, a finite extension of $k(x)$, and we have a finite morphism
\[\phi: \text{Spec}(\mathcal{O}_{L}) \to \text{Spec}(\hat{\mathcal{O}}_{X,x}).\] 
For a height one prime ideal $\mathfrak{q}$ of $\mathcal{O}_{L}$, define the stabiliser
\[G_{\mathfrak{q}} = \{g \in G : g(\mathfrak{q}) = \mathfrak{q}\}.\]
If $\mathfrak{q}$, $\mathfrak{q}'$ are two such primes, and $\phi(\mathfrak{q}) = \phi(\mathfrak{q}')$ then $G_{\mathfrak{q}}$ is conjugate to $G_{\mathfrak{q}'}$ in $G$. \\
Now let $L/F_{x}$ be an abelian extension - then the homomorphism
\[\text{Gal}(L_{\mathfrak{p}_{L}, \mathfrak{q}}/F_{x,y}) \cong G_{\mathfrak{q}} \to G = \text{Gal}(L/F_{x})\]
is independent of the choice of $\mathfrak{q}$, where $\mathfrak{q}$ is any prime ideal such that $\phi(\mathfrak{q})$ is the prime ideal of $\hat{\mathcal{O}}_{X,x}$ associated to the curve $y$. Of course, this (and the statement below for curves) is just basic valuation theory - see \cite{Bour}, chapter VI.\\ 
So the product of all the symbols
\[\prod_{y \ni x}K_{2}^{top}(F_{x,y}) \to \text{Gal}(F_{x}^{ab}/F_{x})\]
is well-defined.
It remains to check this product of symbols converges by proving that for $f$, $g$ and $h \in F$, $h' \in W(F)$, both symbols $(f,g,h)_{x,y}$ and $(f,g|h']_{x,y}$ are trivial for all but finitely many $y \ni x$ - this will be done in the following section.\\
Now fix a reduced irreducible curve $y \subset X$. Our method is very similar to the one above for a fixed point, using only basic Galois theory and decomposition groups. Let $F_{y}$ be the field of fractions of $\mathcal{O}_{y}$ - this will be a discrete valuation field with residue field a global field of positive characteristic. Let $L/F_{y}$ be a finite Galois extension with Galois group $G$ - then $L$ will also be a complete discrete valuation field over a global field. The extension of residue fields $\bar{L}/k(y)$ determines a finite morphism of curves $\pi: y' \to y$, where $y' = y \times_{F} L$ and  $k(y') = \bar{L}$. For each point $x \in y$, we have the decomposition
\[L \otimes_{F_{y}} F_{x,y} = \oplus_{x' \in y', \pi(x') = x}L_{x', y'}\]
where the $L_{x',y'}$ are products of two-dimensional local fields. Each term in the product is a finite extensions of $F_{x,z}$, where $z$ is a branch of $y$ passing through $x$.   \\

 For each $x' \in y'$ with $\pi(x')=x$, define
\[G_{x'}=\{g \in G: g(x')=x'\}.\]
As before, for another $x''$ such that $\pi(x'')=x$, the groups $G_{x'}$ and $G_{x''}$ are conjugate in $G$. We have $G_{x'} \cong$ Gal$(L_{x',y'}/F_{x,y})$.\\
Now let $L/F_{y}$ be an abelian extension. Then the homomorphism 
\[\text{Gal}(L_{x',y'}/F_{x,y}) \cong G_{x'} \to G = \text{Gal}(L/F_{y})\]
is independent of the choice of $x'$. \\
So the product of the symbols
\[\prod_{x \in y}K_{2}^{top}(F_{x,y}) \to \text{Gal}(F_{y}^{ab}/F_{y})\] is well-defined.
Again, we must check that this converges in the group Gal$(F_{y}^{ab}/F_{y})$ - see the section on reciprocity for curves below.\\
This is the context in which we will prove reciprocity laws for the symbols - viewing their values as elements of the Galois group of the maximal abelian extension of the fields associated to the point $x$ and the curve $y$.

\section{Reciprocity at a Point}
In this section we will prove a reciprocity law for the gluing together of the higher tame symbol and the Witt symbol in the Galois group Gal$(F_{x}^{ab}/F_{x})$. Our first step is to prove the lemma promised above, i.e. that the sum converges in Gal$(F_{x}^{ab}/F_{x})$ - it is simpler to prove that the value of each symbol is trivial in $\mathbb{F}_{q}^{\times}$ or $W(\mathbb{F}_{p})$ for almost all $y \ni x$ rather than working in the Galois group at this stage. We will keep the pairings separate to avoid confusion.
 \begin{lemma}\label{wittzeros}Let $f, g \in F$, $h \in W(F)$ and fix a point $x \in X$. Then for all but finitely many $y$ passing through $x$, the Witt symbol $(f,g|h]_{x,y}$ is zero.\end{lemma}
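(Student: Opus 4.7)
The plan is to show that $(f,g|h]_{x,y}$ vanishes whenever $f$ and $g$ are units in $\mathcal{O}_{X,y}$ and the coordinates of $h$ are integral there, and then to observe that this hypothesis excludes only finitely many curves through $x$. Since the pairing at a singular point is defined as a sum over the branches $z\in y(x)$, it suffices to verify the vanishing for each branch separately.

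Fix such a branch $z$ and parameters $u_{x,z}$, $t_z$ identifying $F_{x,z}$ with $k_z(x)((u_{x,z}))((t_z))$, so that $\mathrm{res}_L$ picks out the coefficient of $u_{x,z}^{-1}t_z^{-1}\,du_{x,z}\wedge dt_z$ in the characteristic zero lift. If $v_y(f)=v_y(g)=0$ then the $t_z$-adic expansions of $f$ and $g$ begin with a nonzero element of $k_z(x)((u_{x,z}))$, so $\tfrac{df}{f}$ and $\tfrac{dg}{g}$ both lie in the subspace of $\Omega^{1,\mathrm{cts}}_{F_{x,z}/k_z(x)}$ consisting of forms with non-negative $t_z$-valuation. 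Choosing a coordinate-wise lift $\bar{h}\in W(L)$ with entries in $A[[u_{x,z}]][[t_z]]$, each auxiliary coordinate
\[
\bar{h}(i)=\bar{h}_0^{p^i}+p\,\bar{h}_1^{p^{i-1}}+\dots+p^i\bar{h}_i
\]
also has non-negative $t_z$-valuation, so the 2-form $\bar{h}(i)\tfrac{df}{f}\wedge\tfrac{dg}{g}$ has no $t_z^{-1}$ term and its $u_{x,z}^{-1}t_z^{-1}\,du_{x,z}\wedge dt_z$-coefficient vanishes. Consequently $\mathrm{res}_L$ of the form is zero, every $w_i$ appearing in the definition of $(f,g|h]_{x,z}$ vanishes, and summing over branches gives $(f,g|h]_{x,y}=0$.

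Finally, each of the divisors $\mathrm{div}(f)$ and $\mathrm{div}(g)$ on $X$ is a finite $\mathbb{Z}$-linear combination of prime divisors, so $v_y(f)\neq 0$ or $v_y(g)\neq 0$ only for finitely many $y\ni x$, and analogously for each coordinate $h_i$. The main obstacle I anticipate is reconciling the infinitely many coordinates of $h$ with the demand that only finitely many $y$ be excluded: the $i$-th output of the symbol depends only on $h_0,\dots,h_i$, so at each finite truncation the exceptional set is genuinely finite. I would handle the full statement by appealing to the continuity of the pairing from Proposition \ref{wittproperties}(5) to pass to the truncations $W_n(F)$ and control the exceptional set level by level, which is all that is needed for the convergence of the product of symbols in the Galois group in the subsequent section.
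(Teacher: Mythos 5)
Your proposal is correct and follows essentially the same route as the paper: reduce to showing that $\bar h(i)\,\tfrac{df}{f}\wedge\tfrac{dg}{g}$ has vanishing residue once $v_y(f)=v_y(g)=0$ and the $\bar h(i)$ are $v_y$-integral (the paper isolates this as a separate lemma asserting the residue kills $\Omega^{2,\mathrm{cts}}_{\mathcal{O}_{F_{x,z}}/\mathbb{F}_q}$, while you verify it inline by inspecting the $t_z^{-1}$-coefficient), then deduce finiteness of the excluded curves from the finiteness of the supports of $\mathrm{div}(f)$, $\mathrm{div}(g)$ and $\mathrm{div}(h_i)$. Your closing remark also correctly flags a subtlety the paper leaves implicit: with infinitely many Witt coordinates the exceptional set can only be controlled at each truncation $W_n(F)$, which is nevertheless exactly what is needed for $p$-adic convergence of the sum over $y\ni x$ in the next section.
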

 \begin{proof}
 For each $y \ni x$ and $z \in y(x)$, fix a local parameter of the curve $t_{z} \in \mathcal{O}_{X,z}$ and a local parameter at $x$, $u_{x,z} \in k(z)$. Then
  \[F_{x,z}^{\times} \cong k_{z}(x)^{\times} \times \langle u_{x,z} \rangle \times \langle t_{z} \rangle \times \mathcal{U}_{x,z}\]
 where $\mathcal{U}_{x,z}$ is the group of principal units in $\mathcal{O}_{x,y}^{\times}$.\\
 Then $f,g$ and each entry $h_{i}$ can be expanded in each $F_{x,z}^{\times}$ as, e.g. $f = \alpha_{x,z} u_{x,z}^{i} t_{z}^{j} \varepsilon_{x,z}$ with $\alpha_{x,z} \in k_{z}(x)^{\times}$, $i,j \in \mathbb{Z}$ and $\varepsilon_{x,z} \in \mathcal{U}_{x,z}$. Then for all but finitely many $y \ni x$, the exponent of $t_{z}$ will be zero - this is because each $t_{z}$ represents an irreducible polynomial in $F_{x}$, and any element can be divisible by only finitely many of these.\\
 So for all but finitely many $y$ with $z \in y(x)$, we have $f,g$ and $P_{i}(h_{0},\dots,h_{i-1)} \in \mathcal{O}_{x,z}$. The following lemma will complete the proof.
 \end{proof}
\begin{lemma}
Let $F_{x,z}$ be a two-dimensional local field over $\mathbb{F}_{q}$. The residue map satisfies
\[\text{res}_{F_{x,z}}(\omega) = 0 \text{ for all }\omega \in \Omega^{2, cts}_{\mathcal{O}_{F_{x,z}}/\mathbb{F}_{q}}.\]\end{lemma}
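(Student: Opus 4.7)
The plan is to unwind the definition of the residue directly and show, by a local coordinate computation, that the $t_1^{-1}t_2^{-1}$ coefficient of any generator of $\Omega^{2,cts}_{\mathcal{O}_{F_{x,z}}/\mathbb{F}_q}$ is forced to vanish. First I would fix the isomorphism $F_{x,z} \cong k_z(x)((t_1))((t_2))$, under which $\mathcal{O}_{F_{x,z}} = k_z(x)((t_1))[[t_2]]$, and note that $\Omega^{2,cts}_{\mathcal{O}_{F_{x,z}}/\mathbb{F}_q}$ is generated as an abelian group by forms $\omega = h\, df \wedge dg$ with $h,f,g \in \mathcal{O}_{F_{x,z}}$, so that by additivity of $\mathrm{res}_{F_{x,z}}$ it suffices to treat such generators.

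The key observation is that for any $\phi \in \mathcal{O}_{F_{x,z}}$, writing $\phi = \sum_{k \geq 0}\phi_k(t_1)\, t_2^k$ with $\phi_k \in k_z(x)((t_1))$, both partial derivatives
\[\frac{\partial \phi}{\partial t_1} = \sum_{k \geq 0}\phi_k'(t_1)\, t_2^k, \qquad \frac{\partial \phi}{\partial t_2} = \sum_{k \geq 1}k\phi_k(t_1)\, t_2^{k-1}\]
again lie in $\mathcal{O}_{F_{x,z}}$: the first because $k_z(x)((t_1))$ is stable under $d/dt_1$, the second because the summation begins at $k=1$. Consequently, the coefficient of $dt_1 \wedge dt_2$ in $h\,df \wedge dg$, which equals $h\bigl((\partial_{t_1} f)(\partial_{t_2}g) - (\partial_{t_2}f)(\partial_{t_1}g)\bigr)$, is again an element of $\mathcal{O}_{F_{x,z}}$. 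Since elements of $k_z(x)((t_1))[[t_2]]$ contain no terms of negative $t_2$-power, the $t_1^{-1}t_2^{-1}$ coefficient $a_{-1,-1}$ is zero, and hence $\mathrm{res}_{F_{x,z}}(\omega) = \mathrm{Tr}_{k_z(x)/\mathbb{F}_q}(0) = 0$.

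The computation itself is routine; the only point deserving any care is the passage to the continuous quotient. Because $\Omega^{2,cts}$ is defined as a quotient of the naive Kähler module, the residue is well-defined on it and it is enough to verify vanishing on a convenient set of generators, which the argument above does. I do not anticipate a serious obstacle: the essential content of the lemma is simply the remark that differentiating an element with no negative powers of $t_2$ cannot produce a $t_2^{-1}$ contribution, and this property survives taking Jacobians of two such elements and multiplying by an integral coefficient.
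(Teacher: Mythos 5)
Your proof is correct and takes essentially the same route as the paper's: both rest on the observation that $\partial/\partial t_1$ and $\partial/\partial t_2$ preserve $\mathcal{O}_{F_{x,z}} = k_z(x)((t_1))[[t_2]]$, so the coefficient of $dt_1 \wedge dt_2$ in any integral $2$-form is again integral and therefore has vanishing $t_1^{-1}t_2^{-1}$ term. The paper merely packages this differently — it first shows that $\Omega^{1,cts}_{\mathcal{O}_{F_{x,z}}/\mathbb{F}_q}$ is $\mathcal{O}_{F_{x,z}}$-generated by $dt_1$ and $dt_2$ (using the same derivative computation) and then expands $a\, dt_1 \wedge dt_2$, whereas you compute the Jacobian coefficient of a generator $h\, df \wedge dg$ directly.
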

\begin{proof}
Fix an isomorphism $F_{x,z} \cong \mathbb{F}_{q}((t_{1}))((t_{2}))$ and let $f \in \mathcal{O}_{F_{x,z}}$. Similarly to lemma 2.8 in \cite{Mor3}, we may write $f = \sum_{i,j=0}^{n}a_{i,j}t_{1}^{i}t_{2}^{j} + gt_{1}^{n+1}t_{2}^{n+1}$ for any integer $n$ and some $g \in \mathcal{O}_{F_{x,z}}$. Applying the universal derivation $d: \mathcal{O}_{F_{x,z}}\to \Omega_{\mathcal{O}_{F_{x,z}}/\mathbb{F}_{q}}^{1}
$, we have
\[ df = \sum_{i,j=0}^{n} a_{i,j}(it_{1}^{i-1}t_{2}^{j}dt_{1}+jt_{1}^{i}t_{2}^{j-1}dt_{2}) + g(n+1)(t_{1}^{n}t_{2}^{n+1}dt_{1}+ t_{1}^{n+1}t_{2}^{n}dt_{2}) + t_{1}^{n+1}t_{2}^{n+1}dg.\]
Hence $df - \left(\frac{df}{dt_{1}}dt_{1} + \frac{df}{dt_{2}}dt_{2}\right) \in \cap_{n=1}^{\infty} t_{1}^{n}t_{2}^{n}\Omega_{\mathcal{O}_{F_{x,z}}/\mathbb{F}_{q}}^{1}$. So taking the separated quotient, $\Omega_{\mathcal{O}_{F_{x,z}}/\mathbb{F}_{q}}^{1}$ is generated by $dt_{1}$ and $dt_{2}$. Then $\Omega_{\mathcal{O}_{F_{x,z}}/\mathbb{F}_{q}}^{2} = \Lambda^{2}\Omega_{\mathcal{O}_{F_{x,z}}/\mathbb{F}_{q}}^{1, cts}$ is generated over $\mathcal{O}_{F_{x,z}}$ by $dt_{1}\wedge dt_{2}$, as all other terms in the exterior product are zero.\\
Hence we can restrict to the case $\omega = adt_{1}\wedge dt_{2}$ where $a \in \mathcal{O}_{F_{x,z}}$ and $t_{1}$ and $t_{2}$ are the local parameters of $F_{x,z}$. Decomposing $a$ as a series
\[a = \sum_{i \geq I}\sum_{j \geq 0}a_{i,j}t_{1}^{i}t_{2}^{j}\]
gives the result.\end{proof}
Now we move on to the higher tame symbol.
\begin{lemma}Let $f,g,h \in F^{\times}$ and fix a point $x \in X$. Then for all but finitely many curves $y$ passing through $x$, the higher tame symbol $(f,g,h)_{x,y}$ is equal to one.\end{lemma}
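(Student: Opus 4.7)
The plan is to read off from the defining formula that the higher tame symbol $(f,g,h)_{x,y}$ depends on the curve-valuations $v_y$ in an essential way: every single monomial appearing in the sign $\alpha_{x,y}$ and in each of the three exponents of $f$, $g$, $h$ contains at least one factor of the form $v_y(\cdot)$. Consequently, whenever $v_y(f)=v_y(g)=v_y(h)=0$, all three exponents and $\alpha_{x,y}$ vanish, and so
\[
(f,g,h)_{x,y} = (-1)^0 \cdot f^0 g^0 h^0 \bmod \mathfrak{p}_{x,y} = 1,
\]
regardless of the values of $\bar v_x(f),\bar v_x(g),\bar v_x(h)$. This reduces the statement to a divisor-finiteness claim.

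Next I would show that only finitely many curves $y \ni x$ can have $v_y(\varphi) \ne 0$ for any fixed $\varphi \in F^\times$. Since $X$ is a smooth projective surface, the principal Weil divisor $\mathrm{div}(\varphi)$ is a finite $\mathbb{Z}$-linear combination of prime divisors on $X$; the curves $y$ with $v_y(\varphi)\neq 0$ are exactly those in its (finite) support. Applying this to each of $f$, $g$, $h$ and intersecting with the set of curves through $x$ yields a finite set $S$ of curves such that, for all $y \ni x$ with $y \notin S$, we have $v_y(f)=v_y(g)=v_y(h)=0$ and the previous paragraph gives $(f,g,h)_{x,y}=1$.

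Finally I would handle the case where $x$ is a singular point of $y$, in which the symbol is defined as $\prod_{z \in y(x)} N_{k_z(x)/\mathbb{F}_q}(f,g,h)_{x,z}$. Each branch $z \in y(x)$ corresponds to a height-one prime of $\hat{\mathcal{O}}_{X,x}$ lying above the prime of $\mathcal{O}_{X,y}$ defining $y$, so the discrete valuation $v_z$ of $F_{x,z}$, when restricted to $F$, is a positive integer multiple of $v_y$. In particular $v_y(\varphi)=0$ forces $v_z(\varphi)=0$ for every branch $z$, so the argument of the first paragraph gives $(f,g,h)_{x,z}=1$ on each branch, and the norm-product is again $1$.

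I do not expect a real obstacle: the only points requiring care are the observation that $\bar v_x$ appears in the formula only as a coefficient attached to some $v_y(\cdot)$ (so it cannot by itself make the symbol nontrivial), and the reduction of the branch case to the smooth-point case via the compatibility of the branch valuations $v_z$ with $v_y$ on $F$. Both are immediate inspections.
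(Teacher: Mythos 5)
Your proof is correct, and it takes a slightly different (and tighter) route than the paper's. The paper decomposes each of $f$, $g$, $h$ in $F_{x,z}^{\times}$ as $\alpha_{x,z}u_{x,z}^{i}t_{z}^{j}\varepsilon_{x,z}$, tries to arrange that both the $t_{z}$-exponent $j$ and the $u_{x,z}$-exponent $i$ vanish for almost all $y$, and then uses multilinearity together with the facts that the symbol is trivial whenever one entry is a principal unit or at least two entries are constants. You instead read off directly from the explicit formula that every monomial occurring in the three exponents and in the sign $\alpha_{x,y}$ contains a factor $v_{y}(\cdot)$, so that $v_{y}(f)=v_{y}(g)=v_{y}(h)=0$ already forces $(f,g,h)_{x,y}=1$, irrespective of the values of $\bar{v}_{x}$. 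The finiteness step is then just the finite-support property of $\mathrm{div}(f)$, $\mathrm{div}(g)$, $\mathrm{div}(h)$, and the branch case reduces to the smooth case because each branch valuation $v_{z}$ restricts to a positive multiple of $v_{y}$ on $F$. This buys you something real: you never need to control $\bar{v}_{x}$, whereas the paper's justification that the $u_{x,y}$-exponent is also eventually zero is shaky --- a rational function vanishing at $x$ will typically have $\bar{v}_{x}(\varphi)\geq 1$ for \emph{infinitely} many curves $y\ni x$ even while $v_{y}(\varphi)=0$, so the decomposition-into-units argument needs your observation (or something equivalent) to close. In short, your proof is a correct and somewhat more robust version of the same reduction.
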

\begin{proof}
We proceed in a similar fashion to the proof for the Witt symbol. Again, we may decompose our elements as products $\alpha_{x,y}u_{x,y}^{i}t_{y}^{j}\varepsilon_{x,y}$, with $j=0$ for all but finitely many $y$. If we fix a $j \in \mathbb{Z}$, then there are only finitely many expansions of a fixed element with the exponent of $t_{y}$ being $j$ and the exponent of $u_{x,y}$ being non-zero. So the number of $y$ with $i$ and $j$ not equal to zero is certainly finite. So for all but finitely many $y$, $f,g$ and $h$ will all be in the group $k(x)^{\times} \times \mathcal{U}_{x,y}$.\\
Basic properties of the higher tame symbol show it is trivial if any entry is in the group of principal units $\mathcal{U}_{x,y}$, and a simple calculation shows it is also trivial if more than one of the entries is in $k(x)^{\times}$, which completes the proof.\end{proof}

The triviality of the symbols in these groups of course proves the triviality of the symbols after the homomorphism to the Galois group. We now discuss reciprocity laws in the same way - keeping the symbols separate and in $W(\mathbb{F}_{p})$ and $\mathbb{F}_{q}{^\times}$, proving that the sums and products over all $y \ni x$ are trivial, then observing this implies the same in the abelian Galois group to glue the two symbols together.

\begin{thm}\label{reciprocitywitt}
Let $f,g \in F^{\times}$, $h \in W(F)$.  Then the sum
\[\sum_{y \ni x}(f,g|h]_{x,y} = 0\] 
in the ring of Witt vectors $W(\mathbb{F}_{p})$.\end{thm}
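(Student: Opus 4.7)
The plan is to reduce this Witt reciprocity law to a reciprocity law for residues of 2-forms at the point $x$, in the spirit of Morrow's work in \cite{Mor3}. The $m$-th component of the Witt pairing $(f,g|h]_{x,y}$ is built from residues of the form $\text{res}_L(\bar h(m)\,\frac{df}{f}\wedge\frac{dg}{g})$ via the universal polynomials $P_m$, so a reciprocity law for these residues at $x$ should propagate to a reciprocity law for the pairing itself.

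First, since $W(\mathbb{F}_p)=\mathbb{Z}_p$ is torsion-free, the ghost-component map $W(\mathbb{F}_p)\to\prod_{m\geq 0}\mathbb{Z}_p$ is injective and converts addition of Witt vectors into componentwise addition. It therefore suffices to show that for each $m\geq 0$ the $m$-th ghost component of $\sum_{y\ni x}(f,g|h]_{x,y}$ vanishes. The defining property $P_m(x(0),\dots,x(m))=x_m$ of the universal polynomials translates into the statement that, at each branch $z\in y(x)$, the $m$-th ghost component of the local Witt pairing is (up to reduction and the trace $\mathrm{Tr}_{\mathbb{F}_q/\mathbb{F}_p}$) equal to $\text{res}_L(\bar h(m)\,\frac{df}{f}\wedge\frac{dg}{g})$, so the task reduces to a sum of residues.

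Second, I would invoke the residue reciprocity at $x$,
\[ \sum_{y\ni x}\,\sum_{z\in y(x)} \text{res}_{F_{x,z}}(\omega)=0, \]
valid for a suitable class of 2-forms $\omega$ coming from $F_x$. This is proved in the style of \cite{Mor3}, by expanding $\omega$ as a Laurent series in two local parameters at $x$ and reducing to the classical one-dimensional residue theorem on each curve $y$. The passage to characteristic zero is handled by working modulo $p^{m+1}$ at each ghost-component level, using that the residue of $\omega$ at a given $y$ depends on only finitely many of its Laurent coefficients. Applying this to $\omega=\bar h(m)\,\frac{df}{f}\wedge\frac{dg}{g}$ (the sum being finite by Lemma~\ref{wittzeros}) yields vanishing of each ghost-component sum, which by the first step gives the Witt reciprocity.

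The main obstacle is the careful passage between the characteristic-zero lift $L$ (where the auxiliary coordinates $\bar h(m)$ live and the residues in the pairing are computed) and the algebraic surface $X$ in positive characteristic (where the residue reciprocity is most naturally formulated). A compatible global lift $\bar h\in W(L)$ must be chosen so that the 2-form $\bar h(m)\,\frac{df}{f}\wedge\frac{dg}{g}$ admits a uniform reciprocity over all $y\ni x$, and one must track how the $\text{mod } p$ reduction of these lifts interacts with the formula $P_m$ to recover the Witt components in $\mathbb{F}_q$. A secondary difficulty is the treatment of singular points, for which the pairing involves an additional sum over the branches $z\in y(x)$ and trace maps from $k_z(x)$ down to $\mathbb{F}_q$; the residue reciprocity must be set up to respect this branch structure.
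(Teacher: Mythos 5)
Your overall plan --- reduce the Witt reciprocity to a residue reciprocity for $2$-forms at $x$ --- is the same strategy the paper uses, but there are two substantive gaps. First, the ghost-component reduction is not sound as stated: the ghost map $W(\mathbb{F}_p)\to\mathbb{F}_p^{\mathbb{N}}$ is \emph{not} injective (it kills anything with $w_0=0$, since the ghost polynomials involve multiplication by $p$), and there is no natural map $W(\mathbb{F}_p)\to\prod_m\mathbb{Z}_p$ with the properties you assert. You may have in mind comparing ghost components of the characteristic-zero lifts before reducing modulo $p$, but the pairing is \emph{defined} via a mod-$p$ reduction of the $P_m$, so this requires a careful argument you have not supplied. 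The paper avoids the issue entirely: it handles the passage from $h\in F$ to a general $h\in W(F)$ by an elementary induction on truncation length, using property 6 (truncation compatibility) to kill the first $m-1$ components and property 7 (Verschiebung compatibility) to reduce the remaining top component to the $m=1$ case. This is considerably safer than a ghost-component argument over a ring with $p$-torsion.

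Second, the residue reciprocity you ``invoke'' is the actual content of the theorem, and the paper proves it in full. For the base case $h\in F$, the first Witt component is simply $\mathrm{Tr}_{\mathbb{F}_q/\mathbb{F}_p}\,\mathrm{res}_{x,y}(h\,\tfrac{df}{f}\wedge\tfrac{dg}{g})$, and the needed identity $\sum_{y\ni x}\mathrm{Tr}_{\mathbb{F}_q/\mathbb{F}_p}\,\mathrm{res}_{x,y}(\omega)=0$ is established entirely in positive characteristic (no lift to $L$ is required here), first for regular $\hat{\mathcal{O}}_{X,x}$ by a partial-fractions lemma reducing to $\omega=\frac{\beta}{\alpha^m\gamma^n}dt_1\wedge dt_2$ and an explicit two-term cancellation, and then for non-regular $\hat{\mathcal{O}}_{X,x}$ by Cohen's structure theorem plus the local-global trace formula. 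Your sketch cites ``the style of Morrow'' for this step but does not indicate how the argument adapts: Morrow's characteristic-zero proof has a very different bulk (passing between residue characteristics), and the step you flag as the main obstacle --- the interaction of the lift $L$ with reduction mod $p$ --- is precisely what the paper's positive-characteristic residue argument sidesteps. You should also note that you address neither the non-regular case nor the branch structure at singular points, both of which the paper handles explicitly.
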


We will proceed in a very similar manner to Morrow in \cite{Mor3}, section 3, which deals with reciprocity laws for residues on an arithmetic surface.  We begin with a positive characteristic analogue of Morrow's lemma 3.4.

\begin{lemma}\label{addstructure}
Let $F_{x}$ be the field associated to $x \in X$ as above, $\hat{\mathcal{O}}_{X,x} \cong  k(x)((t_{1}))((t_{2}))$. Then every element of $F_{x}$ is a finite sum of elements of the form
\[\frac{\beta}{\alpha^{m}\gamma^{n}}\]
where $\alpha, \gamma$ are distinct irreducible polynomials in $k(x)(t_{1})(t_{2})$, $\beta \in \hat{\mathcal{O}}_{X,x}$ and $m,n \geq 0$ with at least one greater than zero.\end{lemma}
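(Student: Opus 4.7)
The plan is to write any element of $F_x$ as $\beta/q$ with $\beta \in \hat{\mathcal{O}}_{X,x}$ and $q$ a nonzero polynomial in $k(x)[t_1,t_2]$, and then decompose by induction on the number of distinct irreducible factors of $q$, using Bezout in the $2$-dimensional regular local ring $\hat{\mathcal{O}}_{X,x}$ to handle the high-order part of $\beta$ and classical partial fractions over PIDs to handle the polynomial part.

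First I would observe that $F_x$ equals the localization of $\hat{\mathcal{O}}_{X,x}$ at the multiplicative set $k(x)[t_1,t_2] \setminus \{0\}$: this localization contains both $\hat{\mathcal{O}}_{X,x}$ and $F = \operatorname{Frac}(k(x)[t_1,t_2])$, and any ring containing both contains the localization. Hence every element of $F_x$ has the form $\beta/q$ with $\beta \in \hat{\mathcal{O}}_{X,x}$ and $q \in k(x)[t_1,t_2] \setminus \{0\}$. Factoring $q = u \prod_{i=1}^{r} p_i^{e_i}$ in the UFD $k(x)[t_1,t_2]$ and absorbing the unit $u$ as well as any $p_i$ not vanishing at $x$ (hence a unit in $\hat{\mathcal{O}}_{X,x}$) into the numerator, I may assume each $p_i$ lies in the maximal ideal $\mathfrak{m}$ of $\hat{\mathcal{O}}_{X,x}$.

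I would then induct on $r$. The case $r \leq 2$ is immediate. For $r \geq 3$, the key algebraic input is that the ideal $(p_1^{e_1}, p_2^{e_2}\cdots p_r^{e_r}) \subset \hat{\mathcal{O}}_{X,x}$ has height $2$ and is therefore $\mathfrak{m}$-primary: distinct irreducibles of $k(x)[t_1,t_2]$ remain coprime in $\hat{\mathcal{O}}_{X,x}$, since any minimal prime of $p_i \hat{\mathcal{O}}_{X,x}$ contracts back to $(p_i) \subset k(x)[t_1,t_2]_{(t_1,t_2)}$. Hence $\mathfrak{m}^N \subseteq (p_1^{e_1}, p_2^{e_2}\cdots p_r^{e_r})$ for some $N$. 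Writing $\beta = \beta_0 + \beta_1$ with $\beta_0 \in k(x)[t_1,t_2]$ the truncation of $\beta \in k(x)[[t_1,t_2]]$ to monomials of degree less than $N$ and $\beta_1 \in \mathfrak{m}^N$, a Bezout relation $\beta_1 = A\, p_1^{e_1} + B\,(p_2^{e_2}\cdots p_r^{e_r})$ in $\hat{\mathcal{O}}_{X,x}$ yields
\[\frac{\beta_1}{q} = \frac{B}{p_1^{e_1}} + \frac{A}{p_2^{e_2}\cdots p_r^{e_r}},\]
whose first summand is already of the desired shape and whose second has $r-1$ distinct irreducibles in its denominator and falls under the inductive hypothesis.

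For the polynomial contribution $\beta_0/q \in F$, I would apply partial fractions twice. Separate the $p_i$ into those involving $t_2$ and those lying in $k(x)[t_1]$; in the PID $k(x)(t_1)[t_2]$ the former remain irreducible by Gauss's lemma while the latter become units. Standard partial fractions express $\beta_0/q$ as a $k(x)(t_1)[t_2]$-polynomial plus a sum of terms $R_i/p_i^{e_i}$ for each $t_2$-involving $p_i$. Clearing $t_1$-denominators produces polynomial numerators divided by $\mu_i p_i^{e_i}$ with $\mu_i \in k(x)[t_1]$, together with a leftover polynomial part divided by some $\mu \in k(x)[t_1]$. Applying partial fractions in the PID $k(x)[t_1]$ to each $1/\mu_i$ and to $1/\mu$, using the irreducible factorizations $\mu_i = \prod_j \alpha_{ij}^{a_{ij}}$, splits these into a sum whose terms have denominator either $\alpha_{ij}^{a_{ij}}$ alone or $\alpha_{ij}^{a_{ij}} p_i^{e_i}$; since any irreducible of $k(x)[t_1]$ remains irreducible in $k(x)[t_1,t_2]$ and is distinct from any $p_i$ that genuinely involves $t_2$, these denominators are of the required form. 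The main obstacle is verifying the height-$2$ claim for $(p_1^{e_1}, p_2^{e_2}\cdots p_r^{e_r}) \subset \hat{\mathcal{O}}_{X,x}$, i.e.\ that two distinct polynomial irreducibles share no analytic branch at $x$; this is controlled by the faithful flatness of $\hat{\mathcal{O}}_{X,x}$ over $k(x)[t_1,t_2]_{(t_1,t_2)}$ and the going-down property, and once it is in hand the rest of the argument is a careful tracking of numerators and denominators.
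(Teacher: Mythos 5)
Your proposal is correct, and it takes a genuinely different and more careful route than the paper. The paper's proof rests on a claimed Bezout identity $1 = \beta_1\pi_1^{n_1}\pi_2^{n_2} + \beta_2\pi_3^{n_3}$ with $\beta_1,\beta_2 \in \hat{\mathcal{O}}_{X,x}$; as written this cannot hold when all the $\pi_i$ lie in the maximal ideal $\mathfrak{m}$ of the two-dimensional local ring, since every $\hat{\mathcal{O}}_{X,x}$-linear combination of them then also lies in $\mathfrak{m}$ (take $\pi_1=t_1$, $\pi_2=t_2$, $\pi_3=t_1+t_2$: the ideal $(t_1t_2,\, t_1+t_2)$ is proper). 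Coprimality in a two-dimensional UFD does not imply comaximality, and that gap is exactly what your proof addresses. Your split of $\beta$ into a polynomial truncation $\beta_0$ and a deep tail $\beta_1 \in \mathfrak{m}^N$, using that $(p_1^{e_1},\, p_2^{e_2}\cdots p_r^{e_r})$ is $\mathfrak{m}$-primary (justified correctly via faithful flatness and going-down) to obtain a genuine Bezout relation on the tail, and then handling $\beta_0/q$ by honest partial fractions first in $k(x)(t_1)[t_2]$ and then in $k(x)[t_1]$, is the right repair; it cleanly isolates the two-dimensional commutative algebra that the paper's short argument elides. One imprecision worth fixing in your write-up: $F_x$ is the localization of $\hat{\mathcal{O}}_{X,x}$ at $\mathcal{O}_{X,x}\setminus\{0\}$, not at $k(x)[t_1,t_2]\setminus\{0\}$ — the residue field $k(x)$ need not embed in the function field $F$, and $\mathcal{O}_{X,x}$ need not be a localization of a polynomial ring over $k(x)$. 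The substance of your argument survives: the $\mathfrak{m}$-primary step is done inside $\hat{\mathcal{O}}_{X,x}$ and does not care where the irreducibles come from, and the partial-fractions reduction can be carried out in $F$ directly after choosing a transcendence basis adapted to the chosen regular system of parameters.
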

\begin{proof}
Following \cite{Mor3}, we begin with an element of the form $1/\pi_{1}^{n_{1}}\pi_{2}^{n_{2}}\pi_{3}^{n_{3}}$ with the $\pi_{i}$ distinct irreducible elements of $\hat{\mathcal{O}}_{x,x}$ and the $n_{i} \geq 1$. Then we have $1 = \beta_{1}\pi_{1}^{n_{1}}\pi_{2}^{n_{2}} + \beta_{2}\pi_{3}^{n_{3}}$ for some $\beta_{1}, \beta_{2} \in \hat{\mathcal{O}}_{X,x}$ and hence
\[\frac{1}{\pi_{1}^{n_{1}}\pi_{2}^{n_{2}}\pi_{3}^{n_{3}}} = \frac{\beta_{1}}{\pi_{3}^{n_{3}}} + \frac{\beta_{2}}{\pi_{1}^{n_{1}}\pi_{2}^{n_{2}}}.\]

For the general case, we write any element of $F_{x}$ as $a/b$ with $a,b \in \hat{\mathcal{O}}_{X,x}$. We may uniquely factorise $b$ as $u\gamma_{1}^{r_{1}}\dots\gamma_{s}^{r_{s}}$ where $u \in \hat{\mathcal{O}}_{X,x}^{\times}$, the $\gamma_{i}$ are irreducible polynomials and all exponents are $> 0$. We may replace $a$ with $u^{-1}a$ and so suppose $u=1$. Then repeated application of the first part of the proof decomposes $a/b$ into a sum as required.\end{proof}

We now proceed to the proof of \ref{reciprocitywitt}.
\begin{proof}
We first prove that if $\hat{\mathcal{O}}_{X,x}$ is regular, then for each $\omega \in \Omega^{2, cts}_{F/\mathbb{F}_{q}}$, $\sum_{y \ni x} \text{Tr}_{\mathbb{F}_{q}/\mathbb{F}_{p}}res_{x,y}(\omega) = 0$.
By lemma \ref{addstructure}, it is enough to reduce to the case 
\[\omega = \frac{\beta}{\alpha^{m}\gamma^{n}}dt_{1}\wedge dt_{2}\]
where $ \beta, \alpha, \gamma, m, n$ are all as in lemma \ref{addstructure} and $\hat{\mathcal{O}}_{X,x} \cong k(x)[[t_{1}]][[t_{2}]]$.\\
Let $y$ be a reduced irreducible curve containing $x$. Then $y$ is associated to an irreducible polynomial $t_{y} \in \hat{\mathcal{O}}_{X,x}$. Fix $u_{x,y}$ a local parameter of $x$ at $y$ such that $\langle t_{1}, t_{2}\rangle = \langle u_{x,y}, t_{y}\rangle$. Suppose first that $t_{y}$ is not equal to either $\alpha$ or $\gamma$. Then $\beta/\alpha^{m}\gamma^{n}$, $t_{1}$ and $t_{2}$ all belong to the completion of $\hat{\mathcal{O}}_{X,x}$ at the prime ideal $t_{y}\hat{\mathcal{O}}_{X,x}$, i.e. $\mathcal{O}_{F_{x,y}}$. Then $t_{y}^{-1}$ cannot have a non-zero coefficient in 
\[\frac{\beta}{\alpha^{m}\gamma^{n}}\left(\frac{ dt_{1}}{du_{x,y}} \wedge \frac{dt_{2}}{dt_{y}} + \frac{dt_{1}}{dt_{y}} \wedge \frac{dt_{2}}{du_{x,y}}\right)\] and so 

\[\text{res}_{x,y}(\omega) = \text{coeff}_{u_{x,y}^{-1}t_{y}^{-1}}\left(\frac{\beta}{\alpha^{m}\gamma^{n}}\left(\frac{ dt_{1}}{du_{x,y}} \wedge \frac{dt_{2}}{dt_{y}} + \frac{dt_{1}}{dt_{y}} \wedge \frac{dt_{2}}{du_{x,y}}\right)du_{x,y}\wedge dt_{y}\right) = 0.\]

Similarly to \cite{Mor3}, theorem 3.6, this reduces us to proving that
\[\text{Tr}_{\mathbb{F}_{q}/\mathbb{F}_{p}}\text{res}_{x, \alpha}(\omega) +\text{Tr}_{\mathbb{F}_{q}/\mathbb{F}_{p}}\ \text{res}_{x, \gamma}(\omega) = 0.\]
This part of our proof is much easier that Morrow's case, as the bulk of his proof involves passing between the different characteristics found in residue fields of a arithmetic surface. \\
We must use an important property of the higher residue symbol: let $K$ be a higher local field with final local parameter $t$. The for any $x \in \mathcal{O}_{K}^{\times}$, 
\[\text{res}_{K}\left(\frac{dx}{x}\wedge\frac{dt}{t}\right) = \text{res}_{\bar{K}}\left(\frac{d\bar{x}}{\bar{x}}\right).\]
To see this, first let $x = 1+at$, some $a \in \mathcal{O}_{K}$. Then
\[\frac{dx}{x} \wedge \frac{dt}{t} = x^{-1}da\wedge dt \in \Omega^{2, cts}_{\mathcal{O}_{K}/\mathbb{F}_{q}}\]
and so its residue is zero - but res$_{\bar{K}}(d\bar{x}/\bar{x}) = 0$ also, so we are done in this case. \\
The symbol $dt/t$ is additive with respect to multiplication by $t$, so we can now restrict to the case $\bar{x} \in \bar{F}^{\times}$, $x = \bar{x} + bt$ with $b \in \mathcal{O}_{K}$. Then
\[\text{res}_{K}\left(\frac{dx}{x}\wedge\frac{dt}{t}\right) = \text{res}_{K}\left( \frac{d(\bar{x}+bt)}{\bar{x}+bt}\wedge\frac{dt}{t}\right) = \text{res}_{\bar{K}}\left(\frac{d\bar{x}}{\bar{x}}\right)\]
by expanding $(\bar{x} + bt)^{-1}$.\\
Now take $y$ to be the curve associated to the prime ideal $\alpha\mathcal{O}_{X,x}$.  Then we may take $\alpha$ to be the local parameter for $y$ in $F_{x,y}$.
Expand $\beta$ as a sum $\sum_{i,j}b_{i,j}\gamma^{i}\alpha^{j}$, $b_{i,j} \in \mathbb{F}_{q}$ - we may assume that $\alpha$ and $\gamma$ generate the maximal ideal of $\hat{\mathcal{O}}_{X,x}$, as otherwise the residue is certainly zero. Then

\[\text{res}_{x,\alpha}\left(\frac{\sum_{i,j}b_{i,j}\gamma^{i}\alpha^{j}}{\alpha^{m}\gamma^{n}} \left(\frac{ dt_{1}}{du_{x,y}} \wedge \frac{dt_{2}}{d\alpha} + \frac{dt_{1}}{d\alpha} \wedge \frac{dt_{2}}{du_{x,y}}\right)du_{x,y}\wedge d\alpha\right)\]
 \[= \text{res}_{x}\left(\overline{\left(\frac{\sum_{i}b_{i,m-1}\gamma^{i}}{\gamma^{n}} \right)}\left(\frac{dt_{1}}{du_{x,y}} + \frac{dt_{2}}{du_{x,y}}\right)du_{x,y}\right).\]

Then apply exactly the same argument for $x$ as for the curve $y$ - the only prime of $k(y)$ for which this residue can be non-zero is $\bar{\gamma}$.   Again we may take $\bar{\gamma}$ to be the local parameter at $x$, and see that the residue equals $b_{n-1,m-1}$.
   
  Following the same argument, but first taking $y$ to be the curve defined by $\gamma$, we see the residue in this case will be $-b_{n-1,m-1}$, the negation coming from the equality
  \[\text{res}_{x,\gamma}\left(\frac{\sum_{i,j}b_{i,j}\gamma^{i}\alpha^{j}}{\alpha^{m}\gamma^{n}} \left(\frac{ dt_{1}}{d\gamma} \wedge \frac{dt_{2}}{d\alpha} + \frac{dt_{1}}{d\gamma} \wedge \frac{dt_{2}}{du_{x,y}}\right)d\gamma\wedge d\alpha\right)\]
  \[= - \text{res}_{x,\gamma}\left(\frac{\sum_{i,j}b_{i,j}\gamma^{i}\alpha^{j}}{\alpha^{m}\gamma^{n}} \left(\frac{ dt_{1}}{d\gamma} \wedge \frac{dt_{2}}{d\alpha} + \frac{dt_{1}}{d\gamma} \wedge \frac{dt_{2}}{du_{x,y}}\right)d\alpha\wedge d\gamma\right).\]

So we have proved the reciprocity law in the regular case.\\
\\
Now we suppose $\hat{\mathcal{O}}_{X,x}$ is a two-dimensional, normal complete local ring of characteristic $p$ with finite final residue field $k(x)$ - these assumptions match the assumption that $X$ is a normal projective surface over a finite field $\mathbb{F}_{q}$. It is easy to prove a positive characteristic version of Morrow's lemma 3.7 in \cite{Mor3}: by \cite{Coh}, theorem 16, $\hat{\mathcal{O}}_{X,x}$ contains a subring $B_{0}$ such that $\mathcal{O}_{X,x}$ is a finite $B_{0}$-module and $B_{0}$ is a two-dimensional local ring with residue field $k(x)$. Set $i: \mathbb{F}_{q}[[t_{1}]][[t_{2}]] \to B_{0}$ an isomorphism. Then to show $\hat{\mathcal{O}}_{X,x}$ is a finite $B_{0}$-module, we proceed exactly as in Morrow's proof.
So there is a ring $B$ between $\hat{\mathcal{O}}_{X,x}$ and $\mathbb{F}_{q}$ such that $B \cong k(x)[[t_{1}]][[t_{2}]]$ and $\hat{\mathcal{O}}_{X,x}$ is a finite $B$-module.\\
The usual local-global trace formula tells us that for $\omega \in \Omega^{2, cts}_{\hat{\mathcal{O}}_{X,x}/\mathbb{F}_{q}}$ and $y \ni x$,
\[\text{Tr}_{F/\text{Frac}(B)}(\omega) = \sum_{Y|y} \text{Tr}_{F_{Y}/\text{Frac}(B_{y})}(\omega).\]
Then using the functoriality of the residue map - see \cite{P4}, lemma 3.4.4 - we have
\[\text{res}_{y}\text{Tr}_{F/\text{Frac}(B)}(\omega) = \sum_{Y|y}\text{res}_{y}\text{Tr}_{F_{Y}/\text{Frac}(B_{y})}(\omega) = \sum_{Y|y} \text{Tr}_{k(x)/\mathbb{F}_{q}}\text{res}_{Y}(\omega).\]
Finally applying $\text{Tr}_{\mathbb{F}_{q}/\mathbb{F}_{p}}$ to both sides yields
\[\text{Tr}_{\mathbb{F}_{q}/\mathbb{F}_{p}}\text{res}_{y}\text{Tr}_{F/\text{Frac}(B)}(\omega) = \sum_{Y|y} \text{Tr}_{k(x)/\mathbb{F}_{p}}\text{res}_{Y}(\omega).\]
Now we complete the proof in the case where $X$ is not assumed to be regular. Let $x'$ be the image of $x$ in $B$. We have
\[ \sum_{Y \ni x} \text{Tr}_{k(x)/\mathbb{F}_{p}}\text{res}_{Y} (\omega) = \sum_{y \ni x'}\sum_{Y|y}\text{Tr}_{k(x)/\mathbb{F}_{p}}(\text{res}_{Y}\omega) 
= \sum_{y \ni x'} \text{Tr}_{\mathbb{F}_{q}/\mathbb{F}_{p}}\text{res}_{y}(\text{Tr}_{k(x)/\mathbb{F}_{q}}(\omega)) = 0\]
by the reciprocity law for regular rings proved above.\\
The case for a general Witt vector follows by induction. The case for a one-dimensional Witt vector is done above, so we suppose that the sum is zero for all $f$, $g \in F$ and $h \in W_{m-1}(F)$, the ring of truncated Witt vectors of length $m-1$. Let $h  = (h_{0}, \dots, h_{m-1}) \in W_{m}(F)$. By property 6 in proposition \ref{wittproperties}, 
\[(f,g|(h_{0}, \dots, h_{m-1})]_{x,y} = (w_{0}, \dots, w_{m-1})\]
implies
\[(f,g|(h_{0}, \dots, h_{m-2})]_{x,y} = (w_{0}, \dots, w_{m-2}).\]
Suppose there exists $h \in W_{m}(F)$ such that $\sum_{y \ni x}(f,g|h]_{x,y} \neq 0 $ for $f$, $g \in F$, i.e
\[\sum_{y \ni x}(f,g|(h_{0}, \dots, h_{m-1})]_{x,y} \neq 0.\]
But by our induction hypothesis, 
\[\sum_{y \ni x}(f,g|(h_{0}, \dots, h_{m-2})]_{x,y} = 0,\]
so the only non-zero term in $(w_{0}, \dots, w_{m-1})$ can be the last entry. By the definition of the Witt pairing, this means that $h_{m-1}$ can be the only non-zero term of the Witt vector $h$. Now using property 7 from proposition \ref{wittproperties}, we can reduce back to the one-dimensional case for a contradiction.
\end{proof}

\begin{thm}
Let $f,g,h \in F^{\times}$. Then the product
\[\prod_{y \ni x} \prod_{z \in y(x)}N_{k_{z}(x)/\mathbb{F}_{q}}(f,g,h)_{x,z} = 1\]
in the group $\mathbb{F}_{q}^{\times}$.\end{thm}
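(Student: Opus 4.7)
The plan is to follow the same two-step structure as the proof of Theorem \ref{reciprocitywitt}: first establish the identity under the assumption that $\hat{\mathcal{O}}_{X,x}$ is regular, then deduce the general case via the Cohen-structure / finite-norm argument used at the end of that proof. The key distinction from the Witt situation is that the core identity in the regular case is Weil reciprocity on complete curves in place of the vanishing of residues of differentials, and that everything is written multiplicatively rather than additively.

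For the regular case, fix an isomorphism $\hat{\mathcal{O}}_{X,x} \cong k(x)[[t_1]][[t_2]]$. Using the multilinearity of the higher tame symbol in each argument together with the local decomposition
\[F_{x,z}^\times \cong k_z(x)^\times \times \langle u_{x,z}\rangle \times \langle t_z\rangle \times \mathcal{U}_{x,z}\]
from the proof of Lemma \ref{wittzeros}, and the fact that $(\cdot,\cdot,\cdot)_{x,z}$ is trivial as soon as one entry lies in $\mathcal{U}_{x,z}$ or as soon as two entries lie in $k(x)^\times$ (both visible directly from the defining formula, since in each case all the exponents vanish or the result is congruent to $1$ modulo $\mathfrak{p}_{x,z}$), I would reduce to the case in which each of $f,g,h$ is either an element of $k(x)^\times$ or an irreducible element of $\hat{\mathcal{O}}_{X,x}$. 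Lemma \ref{addstructure}, combined with unique factorisation in the regular local ring $\hat{\mathcal{O}}_{X,x}$, supplies the finite decomposition of an arbitrary element of $F$ needed to carry out this reduction. In this restricted form the symbol $(f,g,h)_{x,y}$ is trivial for all but the handful of curves $y$ associated to the prime divisors appearing among $f,g,h$, and on those curves the sign-and-exponent formula identifies the symbol with the iterated boundary $\partial_x\circ\partial_y$ applied to $\{f,g,h\}\in K_3(F)$. The inner product over branches $z\in y(x)$ together with the local norms $N_{k_z(x)/\mathbb{F}_q}$ collapses to the ordinary one-variable tame symbol of $\partial_y\{f,g,h\}\in K_2(k(y))$ at the closed point $x$ on the projective curve $y$. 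The outer product over $y \ni x$ is then Weil reciprocity on each of the finitely many curves actually contributing, which yields the value $1$.

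For the general (non-regular) case, I would reuse the Cohen-structure argument from the Witt proof: there is a subring $B\cong k(x)[[t_1]][[t_2]]$ of $\hat{\mathcal{O}}_{X,x}$ making the latter a finite $B$-module. The higher tame symbol, being a composition of boundary maps in Milnor $K$-theory, satisfies a projection formula with respect to the norm along the finite extension $F/\operatorname{Frac}(B)$. Pushing $\{f,g,h\}$ down from $F$ to $\operatorname{Frac}(B)$ and regrouping the local contributions over branches lying above a given prime of $B$ then reduces the reciprocity for $\hat{\mathcal{O}}_{X,x}$ to the regular case already proved for $B$, exactly as the functoriality of the residue map was used in the Witt proof.

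I expect the main obstacle to lie in the careful bookkeeping of the sign $(-1)^{\alpha_{x,y}}$ and of the intermediate norms $N_{k_z(x)/\mathbb{F}_q}$ when identifying the iterated local symbol with the global two-variable tame symbol on the projective curve $y$: one must verify that these signs and norms combine compatibly with those occurring in the classical Weil reciprocity identity, so that the final product is genuinely $1$ and not some residual root of unity.
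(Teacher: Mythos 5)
Your plan diverges genuinely from the paper's, and unfortunately the substitute for the paper's central argument is the wrong ingredient. The theorem being proved is a reciprocity \emph{at a fixed closed point} $x$: the outer product is over all curves $y$ passing through $x$. Weil reciprocity, however, is a statement about a \emph{fixed complete curve}, with the product taken over all its closed points. You have the two quantifiers interchanged. What you actually need, once you have identified the iterated symbol with $\partial_x\circ\partial_y$ on $K_3(F)$, is the statement that for a two-dimensional regular local ring with closed point $x$ the composition
\[
K_3(F)\longrightarrow \bigoplus_{y\ni x}K_2\bigl(k(y)\bigr)\longrightarrow K_1\bigl(k(x)\bigr)
\]
in the Gersten complex vanishes. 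That is a genuine local theorem (essentially the case $n=2$ of Kato's result that the Gersten complex for Milnor $K$-theory of a regular local ring is in fact a complex), not a consequence of Weil reciprocity. Weil reciprocity is, by contrast, exactly the right tool for the companion theorem in the next section, reciprocity along a curve, and your argument as written would serve much better there.

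The paper's own proof takes a very different, elementary route that avoids this issue entirely: it runs a case analysis on the shape of $f$, $g$, $h$ under the decomposition $F_{x,z}^\times\cong k_z(x)^\times\times\langle t_1\rangle\times\langle t_2\rangle\times\mathcal{U}_{x,z}$, dismisses the unit and repeated-prime cases using $\{t,t\}=\{t,-1\}$, evaluates the two-curve-plus-constant case explicitly against the local intersection number $(f,g)_x=\dim_{\mathbb{F}_q}\mathcal{O}_{X,x}/(f,g)$, and in the genuine three-curve case resolves by a blow-up and checks invariance of the higher tame symbol under the blow-up via Fesenko's determinant formula. Nothing like a regular/non-regular dichotomy or a projection-formula descent appears in the paper for this theorem; you imported that structure from the Witt proof, and while the Cohen-structure plus norm-projection-formula step for non-regular $\hat{\mathcal{O}}_{X,x}$ is a reasonable idea in principle, it is irrelevant until the regular case is actually established. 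To salvage your approach you would need either to cite the Gersten-complex theorem in place of Weil reciprocity, or to fall back on a direct argument of the kind the paper gives.
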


There have been many proofs of this proposition, however some neglect the norm from $k_{z}(x)$ to $k(x)$ and so are not quite correct. We offer a corrected explicit proof below.

\begin{proof}
We use the decomposition
  \[F_{x,z}^{\times} \cong k_{z}(x)^{\times} \times \langle t_{1} \rangle \times \langle t_{2} \rangle \times \mathcal{U}_{x,z},\]
where $t_{1}$ and $t_{2}$ are any two local parameters generating the maximal ideal at $x$, to look at several different cases.

Firstly, it is easy to see from the definition and the discussion at the beginning of section four that none of the entries can be from $\mathcal{U}_{x,z}$ and at least two must not be constants in $k_{z}(x)$.

If two of the entries are a prime $t$, without loss of generality we assume they are the entries in the group $K_{2}^{top}(F_{x,z})$ - i.e. the first two entries. Then we use the $K$-group relation $\{t,t\} = \{t,-1\}$ to return to the above case.

So we are left with the case where either the first two entries are local equations of different curves passing though $x$ and the last is a constant, or all three entries are different equations of curves.

For the first of these case, we let the two curves whose equations appear in the symbol be $z_{1}$ and $z_{2}$ and compute
\[ \prod_{y \ni x}\prod _{z \in y(x)} N_{k_{z}(x)/\mathbb{F}_{q}}(f,g,h)_{x,z} = N_{k_{z_{1}}(x)/\mathbb{F}_{q}}(f,g,h)_{x,z_{1}}N_{k_{z_{2}}(x)/\mathbb{F}_{q}}(f,g,h)_{x,z_{2}}
\]
as the symbol is trivial whereever $f$, $g$ and $h$ are not local parameters of the curve related to the field. This is then equal to

\[N_{k_{z_{1}}(x)/\mathbb{F}_{q}}(h^{\bar{v}_{g}(f)}  \text{ mod }\mathfrak{p}_{f,g})N_{k_{z_{2}}(x)/\mathbb{F}_{q}}(h^{- \bar{v}_{f}(g)}\text{ mod }\mathfrak{p}_{f,g}) \]
where the $\bar{v}_{f}$ and $\bar{v}_{g}$ appear as we may take the equations $f$ and $g$ to be the other local parameters in the fields $F_{x,z_{2}}$ and $F_{x,z_{1}}$ respectively. A final computation reveals this is equal to
\[ (h \text{ mod } \mathfrak{p}_{f,g})^{(f,g)_{x} - (f,g)_{x}} = 1\]
where $(f,g)_{x} = $ dim$_{\mathbb{F}_{q}}(\mathcal{O}_{X,x}/(f,g))$ is the local intersection number at $x$.

Finally, we have the case where $f$, $g$ and $h$ are all equations of different curves passing through $x$. We can use blow-ups to reduce to the case where $x$ lies on only two curves - see \cite{AG}, V.3 - but we must check this doesn't change the value of the higher tame symbol.

Let $z$ be a branch of a curve $y$ at $x$ and $z'$ its preimage after the blowing-up $\phi$, $x'$ the preimage of $x$ and $f'$, $g'$ and $h'$ the preimages of $f$, $g$ and $h$. We wish to show that
\[(f,g,h)_{x,z} = (f',g',h')_{x',z'}.\]
We take local parameters $t_{1}$, $t_{2}$ at $x$ and $s_{1}$, $s_{2}$ at $x'$ such that $\phi^*(t_{1}) = s_{1}$ and $\phi^*(t_{2}) = s_{1}s_{2}$ - see \cite{AG}, I.4. 

Now we can proceed to calculate both symbols using Fesenko's determinant method - it is easy to see using these local parameters that we have just 
replaced our matrix by one with a column of the matrix added to another, and thus the determinant remains unchanged. So we have returned to the above case and the proof is complete.
\end{proof}

We mention some other methods of proving this theorem, all of which take singular points into account. A recent approach uses central extensions to construct the higher tame symbol as an analogue of a commutator, then proves that the extension is trivial for global elements.  Romo uses a simple version of this argument to prove reciprocity for a curve in \cite{FPR3}, see the next section for more details. In \cite{OZ}, Osipov and Zhu use a similar but much more general method to get higher tame reciprocity laws. They define the category of central extensions of a group by  a Picard groupoid and then get a "commutator map" with three entries for a 2-category. They then apply this specifically to semi-global adelic complexes on an algebraic surface and a certain central extension of the general linear group of a two-dimensional local field. The resulting commutator is the higher tame symbol. They then show the extension can be trivialised, and the reciprocity law follows. Note that many of these papers refer to the higher tame symbol as the Parshin symbol.

\section{Reciprocity along a Curve}

This section will discuss reciprocity laws along curves on an algebraic surface for both the Witt symbol and the higher tame symbol, again treating them separately before gluing them together as a map to the abelian Galois group. We first prove the reciprocity law for the Witt symbol, which similarly to above will take the form of an argument for the one-dimensional case, then a simple induction. For the case of a curve, the proof is much simpler than in the characteristic zero case in \cite{Mor3}, as the mixed characteristic higher local fields complicate matters greatly.

\begin{thm}
Let $f$, $g \in F$, $h \in W(F)$ and fix an irreducible curve $y \subset X$. Then for all but finitely many $x \in y$, the Witt symbol $(f,g|h]_{x,y}$ is zero, and the sum
\[ \sum_{x \in y}(f,g|h]_{x,y} = 0.\]\end{thm}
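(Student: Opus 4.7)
The plan is to reduce to the classical residue theorem for meromorphic $1$-forms on the complete curve $y$ over $\mathbb{F}_{q}$, mirroring the strategy of Theorem \ref{reciprocitywitt} but with the roles of point and curve reversed.

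For the finiteness assertion, I would fix a local parameter $t_{y}$ for $y$ and expand each element $\phi \in \{f, g, h_{0}, h_{1}, \dots\}$ in $F_{x,z}^{\times}$ as $\alpha_{x,z}\, t_{z}^{v_{y}(\phi)}\, \varepsilon_{x,z}$ with $v_{y}(\phi)$ independent of $x$, as in Lemma \ref{wittzeros}. The reductions modulo $t_{z}$ of the unit parts are rational functions on the curve $y$ and hence have zeros and poles at only finitely many points of $y$; outside this finite set, and outside the finitely many singular points of $y$, the combination $\bar{g}(j)\tfrac{df_{1}}{f_{1}}\wedge\tfrac{df_{2}}{f_{2}}$ lies in $\Omega^{2,\mathrm{cts}}_{\mathcal{O}_{F_{x,z}}/\mathbb{F}_{q}}$ after lifting, so Lemma 6.2 forces the residue — and therefore the Witt symbol — to vanish.

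For the reciprocity sum, the key is to decompose $\mathrm{res}_{F_{x,z}}$ as a two-step iterated residue: first a residue along the curve $y$, producing a global differential form $\omega_{y}\in\Omega^{1,\mathrm{cts}}_{k(y)/\mathbb{F}_{q}}$ depending only on $(f,g,h)$ and not on the point $x$, then the one-dimensional residue $\mathrm{res}_{x,z}$ of $\omega_{y}$ on $y$ at $x$. Using the functoriality of the higher residue map (\cite{P4}, 3.4.4) to move the residue operation through the polynomials $P_{i}$ and the trace $\mathrm{Tr}_{k_{z}(x)/\mathbb{F}_{q}}$, summing coordinate-wise reduces the desired identity to
\[\sum_{x\in y}\,\sum_{z\in y(x)}\mathrm{Tr}_{k_{z}(x)/\mathbb{F}_{q}}\,\mathrm{res}_{x,z}(\omega_{y})=0,\]
which is the classical residue theorem for the complete curve $y$ (applied on its normalisation $\tilde{y}$, so that branches $z\in y(x)$ correspond to points of $\tilde{y}$ above $x$). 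Applying $\mathrm{Tr}_{\mathbb{F}_{q}/\mathbb{F}_{p}}$ yields the vanishing of each Witt component.

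The general Witt vector case then follows by the same induction on length used at the end of Theorem \ref{reciprocitywitt}: properties $6$ and $7$ of Proposition \ref{wittproperties} allow one to isolate a hypothetical non-zero top coordinate and reduce it back to the already-established one-dimensional reciprocity, giving a contradiction. The main obstacle I expect is carrying out the iterated residue cleanly in the presence of the characteristic-zero lift $L=A((t_{1}))((t_{2}))$ in the definition of the Witt symbol, and verifying that the "first residue along $y$" commutes with the reduction mod $p$ used in forming the $w_{i}$; the handling of singular points via $\tilde{y}\to y$ is a subsidiary technical point that is easy once the smooth case is in hand.
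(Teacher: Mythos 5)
Your proposal is correct and takes essentially the same approach as the paper: both the finiteness statement and the reciprocity sum are obtained by decomposing the two-dimensional residue as an iterated residue (first along $y$ to produce a $1$-form on $k(y)$ independent of $x$, then the one-dimensional residue at $x$), reducing to the classical residue theorem on the curve, and then handling general Witt vectors by the same length-induction via properties $6$ and $7$ of Proposition \ref{wittproperties}. The paper sidesteps the ``residue commutes with reduction mod $p$'' concern you flag by proving the one-dimensional (single differential form) case first and only then inducting on Witt-vector length, exactly as you fall back to at the end.
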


\begin{proof}
The first statement follows similarly to lemma \ref{wittzeros}. For the second statement, first let $\omega \in \Omega^{2, cts}_{F/\mathbb{F}_{q}}$. Fix a local parameter $t_{z} \in \mathcal{O}_{z}$ for $z$ a branch of $y$ and expand 
\[\omega = \sum_{i} \eta_{i}\wedge t_{z}^{i}dt_{z}\]
where $\eta_{i} \in \Omega^{1}_{k(z)/\mathbb{F}_{q}}$. Then we know by definition that $\text{res}_{x,z}(\omega) = \text{res}_{x}(\eta_{-1})$. So we have reduced to the case for a curve,
\[\sum_{x \in z}\text{res}_{x,z}(\omega) = \sum_{x \in z}\text{res}_{x}(\eta_{-1}) = 0\] which is well-known, see e.g. \cite{AG}, III.7.14. for a general Witt vector, we can now sum over all branches of $y$ and induct as in \ref{reciprocitywitt}.

\end{proof}

We treat the higher tame symbol similarly to the previous section.
\begin{thm}
Let $f$, $g$, $h \in F^{\times}$ and fix an irreducible curve $y \subset X$. Then for all but finitely many $ x \in y$ the higher tame symbol $(f,g,h)_{x,y}$ is equal to one, and the product
\[\prod_{x \in y}N_{k_{z}(x)/\mathbb{F}_{q}}(f,g,h)_{x,y} = 1.\]\end{thm}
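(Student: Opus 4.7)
The first claim (finiteness) proceeds exactly as in the analogous point-wise lemma. Using the multiplicative decomposition $F_{x,z}^{\times} \cong k_{z}(x)^{\times} \times \langle u_{x,z}\rangle \times \langle t_{y}\rangle \times \mathcal{U}_{x,z}$, an element $f \in F^{\times}$ has non-trivial $u_{x,z}$-exponent only when the image $\bar{f} \in k(z)^{\times}$ has a zero or pole at $x$, which happens at only finitely many closed points of the curve. Since the higher tame symbol vanishes on principal units and is trivial on triples lying in $k_{z}(x)^{\times} \times \mathcal{U}_{x,z}$, only finitely many $x \in y$ contribute.

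For the reciprocity identity I would first use the same decomposition together with the Steinberg relation to reduce to triples whose entries come from $k_{z}(x)^{\times} \cup \langle u_{x,z}\rangle \cup \langle t_{y}\rangle$. Triples in which two or more entries are powers of $t_{y}$ collapse via the $K$-theoretic identity $\{t_{y}, t_{y}\} = \{t_{y}, -1\}$ (as in the proof at a point) to the essentially unique non-trivial case: exactly one entry, say $f$, has non-zero $y$-valuation $n = v_{y}(f)$. A direct computation using the formula of Section~3, noting that all terms of $\alpha_{x,y}$ containing a second factor of $v_{y}(g)$ or $v_{y}(h)$ vanish, yields
\[
(f,g,h)_{x,z} \equiv (-1)^{n\bar{v}_{x}(g)\bar{v}_{x}(h)}\, g^{-n\bar{v}_{x}(h)} h^{n\bar{v}_{x}(g)} \pmod{\mathfrak{p}_{x,z}},
\]
which is precisely the $n$-th power of the classical one-dimensional tame symbol $\{h,g\}$ of $\bar{g}, \bar{h} \in k(z)^{\times}$ at the closed point $x$ of the branch $z$. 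When all three $y$-valuations vanish every exponent in the symbol formula is zero, so the symbol is trivially $1$.

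Next I would apply $N_{k_{z}(x)/\mathbb{F}_{q}}$ and sum over the branches $z \in y(x)$, so that each local contribution becomes the tame symbol, taken on the normalisation $\widetilde{y}$ of $y$, at each preimage of $x$. Taking the product over all $x \in y$ then assembles into the global product of classical tame symbols of $\bar{g}, \bar{h}$ over the closed points of $\widetilde{y}$, which equals $1$ by Weil reciprocity for the smooth projective curve $\widetilde{y}$ over $\mathbb{F}_{q}$ (e.g.\ \cite{AG}, III.7.14, as in the Witt case above).

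The main obstacle is the bookkeeping at singular points of $y$: one must verify that the norm $N_{k_{z}(x)/\mathbb{F}_{q}}$ together with the sum over branches really corresponds to evaluation of the pulled-back one-dimensional tame symbol on $\widetilde{y} \to y$, so that the resulting product over $x \in y$ genuinely matches the Weil reciprocity product on $\widetilde{y}$. This is essentially functoriality of the tame symbol under finite extensions of complete discrete valuation fields, but it has to be written out carefully so that the exponent $n = v_{y}(f)$ and the sign contributed by $\alpha_{x,y}$ do not disrupt the reduction to the one-dimensional reciprocity identity.
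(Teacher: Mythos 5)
Your proposal aims at the same target as the paper's proof, namely Weil reciprocity on the (normalisation of the) curve, but takes a genuinely different intermediate route. The paper does \emph{not} perform any case reduction: it rewrites the higher tame symbol, for arbitrary $f,g,h$, as
\[(f,g,h)_{x,z} = (p(t_{z}^{-v_{z}(f)}f),p(t_{z}^{-v_{z}(g)}g))_{x}^{\bar{v}_{x}(h)}\cdot(p(t_{z}^{-v_{z}(g)}g),p(t_{z}^{-v_{z}(h)}h))_{x}^{-\bar{v}_{x}(f)}\cdot(p(t_{z}^{-v_{z}(h)}h),p(t_{z}^{-v_{z}(f)}f))_{x}^{\bar{v}_{x}(g)}\cdot(-1)^{\beta_{x}},\]
where $(\,\cdot,\cdot\,)_x$ is the one-dimensional tame symbol on the residue field, and then applies Weil reciprocity to the tame symbols and a degree-zero argument to $\prod_x(-1)^{\beta_x}$. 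No monomial decomposition of $f,g,h$ is made, so the proof works uniformly in all three arguments.

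Your case reduction has a real gap as written. The decomposition $F_{x,z}^{\times} \cong k_{z}(x)^{\times}\times\langle u_{x,z}\rangle\times\langle t_{y}\rangle\times\mathcal{U}_{x,z}$ is local to the pair $(x,z)$, so ``reducing to triples whose entries come from $k_{z}(x)^{\times}\cup\langle u_{x,z}\rangle\cup\langle t_{y}\rangle$'' is not a single global case: the element $u_{x,z}$ and the splitting of $f$ into $u$- and $t$-parts both change as $x$ moves along $y$. To make your argument run, you need to choose once and for all a single $t\in F^{\times}$ with $v_y(t)=1$ (this exists since $v_y$ is a divisorial valuation on $F$), write $f=f_0 t^{a}$, $g=g_0 t^{b}$, $h=h_0 t^{c}$ with $v_y(f_0)=v_y(g_0)=v_y(h_0)=0$, expand multilinearly in $K_2\times F^\times$, and use $\{t,t\}=\{t,-1\}$ to kill the terms with two or more copies of $t$. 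Only then is your ``exactly one entry with nonzero $y$-valuation'' case genuinely the only nontrivial one, and it is the $t$-entry, not a general $f$, that carries the valuation. Your explicit formula $(f,g,h)_{x,z}\equiv(-1)^{n\bar v_x(g)\bar v_x(h)}g^{-n\bar v_x(h)}h^{n\bar v_x(g)}$ in that case is correct and does equal the $n$-th power of the one-dimensional tame symbol, so the subsequent appeal to Weil reciprocity works; but in the paper's version this identity is a special case of the general rearrangement above, not a case that needs to be isolated.

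You also explicitly leave the singular-point bookkeeping (passing between $y$, its branches $z\in y(x)$, and the normalisation $\widetilde y$, with norms $N_{k_z(x)/\mathbb{F}_q}$) as an unresolved obstacle. The paper addresses this by first proving $\prod_{x\in z}N_{k_z(x)/\mathbb{F}_q}(f,g,h)_{x,z}=1$ branch by branch and then taking the product over branches, invoking absolute convergence to reorder; that is still terse, but it is a concrete route and should be followed rather than left open. In short: your reduction-to-Weil strategy is sound in principle and buys a more elementary, case-by-case verification, but it requires a global uniformiser to be legitimate and the singular-point step cannot be deferred; the paper's formula-rearrangement avoids both issues by treating all inputs at once.
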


We provide a proof similar to the proof above for the Witt symbol, showing we can reduce to the one-dimensional case. This method is well-known, but not with the norm from $k_{z}(x)$ to $\mathbb{F}_{q}$ taken into account. We also provide a discussion of other methods of proving this theorem.

\begin{proof}
By rearranging the definition of the higher tame symbol given at the start of section 3, for any $x \in z \in y(x)$ we can write
\[(f,g,h)_{x,z} = (p(t_{z}^{-v_{z}}(f)f),p(t_{z}^{-v_{z}}(g)g))_{x}^{\bar{v}_{x}(h)} \times\]
\[ (p(t_{z}^{-v_{z}}(g)g),p(t_{z}^{-v_{z}}(h)h))_{x}^{-\bar{v}_{x}(f)}
\times\] 
\[(p(t_{z}^{-v_{z}}(h)h),p(t_{z}^{-v_{z}}(f)f))_{x}^{\bar{v}_{x}(g)} \times
(-1)^{\beta_{x}}\]
where $p$, $v_{z}$ and $\bar{v}_{x}$ are all as defined in section 3, the symbol $( \ , \ )_{x}$ is the tame symbol and
\[\beta_{x} = v_{z}(f)v_{z}(g)\bar{v}_{x}(h)+v_{z}(f)v_{z}(h)\bar{v}_{x}(g) + v_{z}(g)v_{z}(h)\bar{v}_{x}(f).
\]
Now by Weil reciprocity, the product of each tame symbol $\prod_{x \in z}( \ , \ )_{x} = 1$, so we just need to check that the product $\prod_{x \in z}(-1)^{\beta_{x}}$ is also equal to one.

We have
\[\prod_{x \in z}(-1)^{v_{z}(f)v_{z}(g)\bar{v}_{x}(h)} = (-1)^{v_{z}(f)v_{z}(g)\left( \sum_{x \in z}\bar{v}_{x}(h)\right)} = 1\]
by the theory of valuations on a curve. By symmetry this works for all terms of $\beta$, and so the formula
\[\prod_{x \in z}N_{k_{z}(x)/\mathbb{F}_{q}}(f,g,h)_{x,z} = 1\] holds. Now we write
\[\prod_{x \in y} (f,g,h)_{x,y} = \prod_{x \in y}\prod_{z \in y(x)} N_{k_{z}(x)/\mathbb{F}_{q}}(f,g,h)_{x,z} = 1\]
where we may rearrange the product as needed by the argument in lemma 6.3. This completes the proof.

\end{proof}

The methods of Osipov and Zhu in \cite{OZ} also work for reciprocity along a curve. In \cite{FPR3}, Romo uses an algebraic construction to show the product of higher tame symbols around a point is one. He shows that the tame symbol is the unique continuous Steinberg symbol in the cohomology class of the commutator symbol given by the central extension
\[0 \to k(v)^{\times} \to \widetilde{\Sigma_{v}^{\times}} \to \Sigma_{v}^{\times} \to 0\]
where $\Sigma_{v}$ is the fraction field of a discrete valuation ring with residue field $k(v)$. The Parshin symbol can then be realised (up to a sign) as the composition of two of these commutators. The reciprocity law follows very easily, via an analogy to Tate's residue theorem (see \cite{T3}). Romo's approach has much in common with Osipov and Zhu, but is much more explicit about this particular case when compared to their categorical constructions.

\section{Further Work}
To extend Parshin's higher local class field theory to algebraic surfaces, it is necessary to prove some duality theorems - i.e. that the pairings of the topological $K$-groups with the Witt vectors and multiplicative group are non-degenerate on certain subgroups of ${\prod}'K_{2}^{top}(F_{x,y})$, where the restricted product is the geometric two-dimensional adeles defined in \cite{F3}. For the Witt pairing, this idea was originally developed by Kawada and Satake in \cite{KS} for a global field of positive characteristic. For the higher tame pairing, the classical case is Kummer theory.  It is expected that these pairings, along with the Frobenius for unramified extensions will give a reciprocity map for global and semi-global fields associated to an algebraic surface. \\
As in the one-dimensional case, the kernel will be the diagonally embedded global elements - but in this case the semi-global elements of $F_{x}$ and $F_{y}$ are also included. The reciprocity laws proven in this paper can be used to show that these elements are contained in the kernel of the reciprocity map, an important first step to obtaining this reciprocity map for an algebraic surface. We use the methods described in section five to define the explicit reciprocity map as the gluing together of the Witt symbol and the higher tame symbol in the absolute abelian Galois group. Once the duality theorems are proven this will then give the full details of the exact sequences of class field theory.

\bibliographystyle{plain}
\bibliography{References}

\begin{thebibliography}{10}

\bibitem{Bour}
N.~Bourbaki.
\newblock {\em Commutative Algebra}.
\newblock Springer, 1998.

\bibitem{Coh}
I.~Cohen.
\newblock On the structure and ideal theory of complete local rings.
\newblock {\em Trans. Amer. Math. Soc.}, 59:54--106, 1946.

\bibitem{F7}
I.~Fesenko.
\newblock Class field theory of multidimensional local fields of characteristic
  0, with the residue field of positive characteristic.
\newblock {\em St. Petersburg Math. J.}, 3:649--678, 1992.

\bibitem{F8}
I.~Fesenko.
\newblock Multidimensional local class field theory ii.
\newblock {\em St. Petersburg Math. J.}, 3:1103--1126, 1992.

\bibitem{F6}
I.~Fesenko.
\newblock Sequential topologies and quotients of {M}ilnor {K}-groups of higher
  local fields.
\newblock {\em St. Petersburg Math. J.}, 13:485--501, 2002.

\bibitem{F3}
I.~Fesenko.
\newblock Analysis on arithmetic schemes ii.
\newblock {\em J. K-theory 5}, pages 437 -- 557, 2010.

\bibitem{IHLF}
I.~Fesenko and M.~Kurihara, editors.
\newblock {\em Invitation to Higher Local Fields}.
\newblock Geometry and Topology Monographs 3, 2000.

\bibitem{FV}
I.~Fesenko and S.V. Vostokov.
\newblock {\em Local Fields and Their Extensions}.
\newblock American Mathematical Society, 2 edition, 2002.

\bibitem{FV2}
I.~B. Fesenko and S.V. Vostokov.
\newblock On torsion in higher {M}ilnor functors for multi-dimensional fields.
\newblock {\em In Koltsai moduli, St Petersburg University Press}, vyp.
  1:75--87, {E}nglish transl. in {A}mer. {M}ath. {S}oc. {T}ransl. (2)
  154(1992), 25-35 1986.

\bibitem{AG}
R.~Hartshorne.
\newblock {\em Algebraic Geometry}.
\newblock Springer, 1977.

\bibitem{KK1}
K.~Kato.
\newblock A generalization of local class field theory by using {K}-groups {I}.
\newblock {\em J. Fac. Sci. Univ. Tokyo Sect. 1{A}}, pages 303--376, 1979.

\bibitem{KK2}
K.~Kato.
\newblock A generalization of local class field theory by using {K}-groups
  {II}.
\newblock {\em J. Fac. Sci. Univ. Tokyo}, 27:603--683, 1980.

\bibitem{KK3}
K.~Kato.
\newblock A generalization of local class field theory by using {K}-groups
  {III}.
\newblock {\em Fac. Sci. Univ. Tokyo, Sect. 1{A}}, 29(31-43), 1982.

\bibitem{KSCFT1}
K.~Kato and S.~Saito.
\newblock Two dimensional class field theory.
\newblock {\em Adv. Stud. Pure Math}, 2:103--152, 1983.

\bibitem{KSCFT2}
K.~Kato and S.~Saito.
\newblock Unramified class field theory of arithmetical surfaces.
\newblock {\em Ann. of Math. (2)}, 118(2):241--275, 1983.

\bibitem{KSCFT3}
K.~Kato and S.~Saito.
\newblock Global class field theory of arithmetics schemes.
\newblock {\em Contemp. Math.}, 55(1):255--331, 1986.

\bibitem{KS}
Y.~Kawada and I.~Satake.
\newblock Class formations {I}{I}.
\newblock {\em J. Fac. Sci. Univ. Tokyo Sect. I}, pages 353--389, 1956.

\bibitem{MK}
M.~Kerz.
\newblock Higher class field theory and the connected component.
\newblock {\em arxiv}, 0711.4485v3, 2010.

\bibitem{KS3}
M.~Kerz and A.~Schmidt.
\newblock Covering data and higher dimensional global class field theory.
\newblock {\em J. Number Theory}, 129:2569 -- 2599, 2009.

\bibitem{Mi}
J.~S. Milne.
\newblock {\em Etale Cohomology}.
\newblock Princeton University Press, 1980.

\bibitem{Mor3}
M.~Morrow.
\newblock An explicit approach to residues on and dualizing sheaves of
  arithmetic surfaces.
\newblock {\em New York J. Math.}, 16, 2010.

\bibitem{CHDLF}
M.~Morrow.
\newblock Constructing higher dimensional local fields.
\newblock {\em arxiv}, 1204.0586v1, 2012.

\bibitem{N}
Jurgen Neukirch.
\newblock {\em Algebraic Number Theory}.
\newblock Springer, 1999.

\bibitem{OZ}
D.~Osipov and X.~Zhu.
\newblock A categorical proof of the {P}arshin reciprocity laws on algebraic
  surfaces.
\newblock {\em arXiv}, (1002.4848), 2010.

\bibitem{P1}
A.~N. Parshin.
\newblock On the arithmetic of two-dimensional schemes, repartitions and
  residues.
\newblock {\em Math. Izv.}, 10:695 -- 747, 1976.

\bibitem{P4}
A.~N. Parshin.
\newblock {\em Local Class Field Theory}, volume 165, pages 157--185.
\newblock Algebraic Geometry and its Applications, Collection of articles,
  Trudy Mat. Inst. Steklov., 1985.

\bibitem{P5}
A.~N. Parshin.
\newblock Galois cohomology and the {B}rauer group of local fields.
\newblock {\em Trudy Mat. Inst. Akad. Steklova}, 183:159--169, {E}nglish
  transl. 1990.

\bibitem{FPR3}
F.~P. Romo.
\newblock Algebraic construction of the tame symbol and the parshin symbol on a
  surface.
\newblock {\em J. Algebra}, 274:335--346, 2004.

\bibitem{GT}
G.~Tamme.
\newblock {\em Introduction to Etale Cohomology}.
\newblock Springer, 1994.

\bibitem{T3}
J.~Tate.
\newblock Residue of differentials on curves.
\newblock {\em Ann. Sci. Ec. Norm. Super.}, 1(1):149--159, 1968.

\end{thebibliography}
\end{document}